\documentclass[11pt]{amsart}
\usepackage{amssymb}
\usepackage{amsmath}
\usepackage[active]{srcltx}
\usepackage{t1enc}
\usepackage[latin2]{inputenc}
\usepackage{verbatim}
\usepackage{amsmath,amsfonts,amssymb,amsthm}
\usepackage[mathcal]{eucal}
\usepackage{enumerate}
\usepackage[centertags]{amsmath}
\usepackage{graphics}

\setcounter{MaxMatrixCols}{10}

\newtheorem{theorem}{Theorem}

\newtheorem{lemma}{Lemma}

\begin{document}
\author{George Tephnadze}
\title[Fejér means]{On the convergence of Fejér means of Walsh-Fourier
series in the space $H_{p}$}
\address{G. Tephnadze, Department of Mathematics, Faculty of Exact and
Natural Sciences, Tbilisi State University, Chavchavadze str. 1, Tbilisi
0128, Georgia and Department of Engineering Sciences and Mathematics, Lule%
\aa {} University of Technology, SE-971 87, Lule\aa {}, Sweden.}
\thanks{The research was supported by Shota Rustaveli National Science
Foundation grant no.52/54 (Bounded operators on the martingale Hardy spaces).%
}
\email{giorgitephnadze@gmail.com}
\date{}
\maketitle

\begin{abstract}
The main aim of this paper is to find the necessary and sufficient
conditions for a modulus of continuity of a martingale $F\in H_{p},$ for
which Fejér means convergence in $H_{p}$-norm, when $0<p\leq 1/2.$
\end{abstract}

\date{}

\textbf{2010 Mathematics Subject Classification.} 42C10.

\textbf{Key words and phrases:} Walsh system, Fejér means, martingale Hardy
space, modulus of continuity.

\section{INTRODUCTION}

Weisz \cite{We2} considered the norm convergence of Fejér means of
Walsh-Fourier series and proved that
\begin{equation}
\left\Vert \sigma _{k}F\right\Vert _{H_{p}}\leq c_{p}\left\Vert F\right\Vert
_{H_{p}},\left( F\in H_{p},\text{ \ }p>1/2\right) .  \label{markW1}
\end{equation}

Goginava in \cite{Goginava} \ (see also \cite{tep1}) proved that there
exists a martingale $F\in H_{p}$ $\left( 0<p\leq 1/2\right) ,$ such that
\begin{equation*}
\sup_{n}\left\Vert \sigma _{n}F\right\Vert _{p}=+\infty .
\end{equation*}

Weisz \cite{We3} also considered Fejér means on the subsequence $\left\{
2^{n}:n\geq 0\right\} $ and proved that
\begin{equation}
\left\Vert \sigma _{2^{n}}F\right\Vert _{H_{p}}\leq c_{p}\left\Vert
F\right\Vert _{H_{p}},\text{\ }\left( F\in H_{p},\text{ \ }p>0\right) .
\label{77}
\end{equation}

The author \cite{tep6} proved that if $F\in H_{p}$ $\left( 0<p<1/2\right) $
and
\begin{equation*}
\omega _{H_{p}}\left( 1/2^{n},F\right) =o\left( 1/2^{n\left( 1/p-2\right)
}\right) ,\text{ as \ }n\rightarrow \infty ,
\end{equation*}%
then
\begin{equation}
\left\Vert \sigma _{n}F-F\right\Vert _{p}\rightarrow 0,\text{ when }%
n\rightarrow \infty .  \label{fe1}
\end{equation}

Furthermore, it was shown that there exists a martingale $F\in H_{p}$ $%
\left( 0<p<1/2\right) ,$\ for which
\begin{equation*}
\omega _{H_{p}}\left( 1/2^{n},F\right) =O\left( 1/2^{n\left( 1/p-2\right)
}\right) ,\text{ as \ }n\rightarrow \infty
\end{equation*}%
and (\ref{fe1}) does not holds, as\thinspace \thinspace \thinspace $%
n\rightarrow \infty .$

The author \cite{tep6} also consider endpoint case $p=1/2$ and proved that
if $F\in H_{1/2}$ and
\begin{equation}
\omega _{H_{1/2}}\left( 1/2^{n},F\right) =o\left( 1/n^{2}\right) ,\text{ as
\ }n\rightarrow \infty ,  \label{fe0}
\end{equation}%
then
\begin{equation*}
\left\Vert \sigma _{n}F-F\right\Vert _{1/2}\rightarrow 0,\text{ when }%
n\rightarrow \infty .
\end{equation*}

Moreover, condition (\ref{fe0}) is sharp in the same sense.

The results for summability of Fejér means of Walsh-Fourier series can be
found in \cite{tb}, \cite{gog8}, \cite{Mor,na, PS}, \cite{We2}.

The main aim of this paper is to generalize inequality (\ref{markW1}) and
find necessary and sufficient conditions on sequences $\left\{ n_{k}:k\geq
0\right\} ,$ for which%
\begin{equation}
\left\Vert \sigma _{n_{k}}F-F\right\Vert _{p}\rightarrow 0,\text{ \ \ when \
}k\rightarrow \infty ,\text{ \ }\left( F\in H_{p},\text{ \ }0<p\leq
1/2\right) ,  \label{fe2}
\end{equation}

Moreover, we prove some estimates for the Fejér means and show the sharpness
of them, when $0<p\leq 1/2$ (see Theorems \ref{theorem1} and \ref{theorem2}%
). By applying this estimates we find the necessary and sufficient
conditions for a modulus of continuity of a martingale $F\in H_{p},$ for
which (\ref{fe2}) holds, when $0<p\leq 1/2$ (see Theorems \ref{theorem3} and %
\ref{theorem4}).

\section{Definitions and Notations}

Let $\mathbb{N}_{+}$ denote the set of the positive integers, $\mathbb{N}:=%
\mathbb{N}_{+}\cup \{0\}.$ Denote by $Z_{2}$ the discrete cyclic group of
order 2, that is $Z_{2}:=\{0,1\},$ where the group operation is the modulo 2
addition and every subset is open. The Haar measure on $Z_{2}$ is given so
that the measure of a singleton is 1/2.

Define the group $G$ as the complete direct product of the group $Z_{2},$
with the product of the discrete topologies of $Z_{2}$`s. The elements of $G$
are represented by sequences $x:=(x_{0},x_{1},...,x_{j},...),$ where $%
x_{k}=0\vee 1.$

It is easy to give a base for the neighborhood of $x\in G$
\begin{equation*}
I_{0}\left( x\right) :=G,\text{ \ }I_{n}(x):=\{y\in
G:y_{0}=x_{0},...,y_{n-1}=x_{n-1}\}\text{ }(n\in \mathbb{N}).
\end{equation*}

Denote $I_{n}:=I_{n}\left( 0\right) ,$ $\overline{I_{n}}:=G$ $\backslash $ $%
I_{n}$ and $e_{n}:=\left( 0,...,0,x_{n}=1,0,...\right) \in G,$ for $n\in
\mathbb{N}$. Then it is easy to show that
\begin{equation}
\overline{I_{M}}=\underset{i=0}{\bigcup\limits^{M-1}}I_{i}\backslash
I_{i+1}=\left( \overset{M-2}{\underset{k=0}{\bigcup }}\overset{M-1}{\underset%
{l=k+1}{\bigcup }}I_{l+1}\left( e_{k}+e_{l}\right) \right) \bigcup \left(
\underset{k=0}{\bigcup\limits^{M-1}}I_{M}\left( e_{k}\right) \right) .
\label{1}
\end{equation}

If $n\in \mathbb{N},$ then every $n$ can be uniquely expressed as $%
n=\sum_{k=0}^{\infty }n_{j}2^{j},$ where $n_{j}\in Z_{2}$ $~(j\in \mathbb{N}%
) $ and only a finite numbers of $n_{j}$ differ from zero.

Let
\begin{equation*}
\left[ n\right] :=\min \{j\in \mathbb{N},n_{j}\neq 0\}\text{ \ \ and \ \ \ }%
\left\vert n\right\vert :=\max \{j\in \mathbb{N},n_{j}\neq 0\},
\end{equation*}
that is $2^{\left\vert n\right\vert }\leq n\leq 2^{\left\vert n\right\vert
+1}.$ Set
\begin{equation*}
d\left( n\right) =\left\vert n\right\vert -\left[ n\right] ,\text{ \ for \
all \ \ }n\in \mathbb{N}.
\end{equation*}

Define the variation of $n\in \mathbb{N},$ with binary coefficients $\left(
n_{k},k\in \mathbb{N}\right) ,$ by
\begin{equation}
V\left( n\right) =n_{0}+\sum_{k=1}^{\infty }\left\vert
n_{k}-n_{k-1}\right\vert .  \label{var}
\end{equation}

Every $n\in \mathbb{N}$ can be also represented as $n=%
\sum_{i=1}^{r}2^{n_{i}},n_{1}>n_{2}>...n_{r}\geq 0.$ For such representation
of $n\in \mathbb{N},$ let denote numbers
\begin{equation*}
n^{\left( i\right) }=2^{n_{i+1}}+...+2^{n_{r}},i=1,...,r.
\end{equation*}

The norms (or quasi-norm) of the spaces $L_{p}(G)$ and $L_{p,\infty }\left(
G\right) ,$ $\left( 0<p<\infty \right) $ are respectively defined by
\begin{equation*}
\left\Vert f\right\Vert _{p}^{p}:=\int_{G}\left\vert f\right\vert ^{p}d\mu
,\ \ \ \ \left\Vert f\right\Vert _{L_{p,\infty }(G)}^{p}:=\sup_{\lambda
>0}\lambda ^{p}\mu \left( f>\lambda \right) <+\infty ,
\end{equation*}

The $k$-th Rademacher function is defined by
\begin{equation*}
r_{k}\left( x\right) :=\left( -1\right) ^{x_{k}}\text{\qquad }\left( \text{ }%
x\in G,\text{ }k\in \mathbb{N}\right) .
\end{equation*}

Now, define the Walsh system $w:=(w_{n}:n\in \mathbb{N})$ on $G$ as:
\begin{equation*}
w_{n}(x):=\overset{\infty }{\underset{k=0}{\Pi }}r_{k}^{n_{k}}\left(
x\right) =r_{\left\vert n\right\vert }\left( x\right) \left( -1\right) ^{%
\underset{k=0}{\overset{\left\vert n\right\vert -1}{\sum }}n_{k}x_{k}}\text{%
\qquad }\left( n\in \mathbb{N}\right) .
\end{equation*}

The Walsh system is orthonormal and complete in $L_{2}\left( G\right) $ (see
\cite{sws})$.$

If $f\in L_{1}\left( G\right) ,$ we can establish Fourier coefficients,
partial sums of Fourier series, Fejér means, Dirichlet and Fejér kernels in
the usual manner:
\begin{equation*}
\widehat{f}\left( n\right) :=\int_{G}fw_{n}d\mu ,\,\,\left( n\in \mathbb{N}%
\right) ,\text{ \ }S_{n}f:=\sum_{k=0}^{n-1}\widehat{f}\left( k\right) w_{k},%
\text{\ }\left( n\in \mathbb{N}_{+},S_{0}f:=0\right) ,
\end{equation*}%
\begin{equation*}
\sigma _{n}f:=\frac{1}{n}\sum_{k=1}^{n}S_{k}f,\text{ \ \ }%
D_{n}:=\sum_{k=0}^{n-1}w_{k\text{ }},\text{ \ }K_{n}:=\frac{1}{n}\overset{n}{%
\underset{k=1}{\sum }}D_{k}\text{ },\text{ \ }\left( \text{ }n\in \mathbb{N}%
_{+}\text{ }\right) .
\end{equation*}

Recall that (see (\cite{sws}))%
\begin{equation}
D_{2^{n}}\left( x\right) =\left\{
\begin{array}{ll}
2^{n} & \,\text{if\thinspace \thinspace \thinspace }x\in I_{n} \\
0\, & \ \,\text{if}\,\,x\notin I_{n}.%
\end{array}%
\right.  \label{1dn}
\end{equation}

Let $n=\sum_{i=1}^{r}2^{n_{i}},$ $n_{1}>n_{2}>...>n_{r}\geq 0.$ Then (see
\cite{G-E-S} and \cite{sws})
\begin{equation}
nK_{n}=\sum_{A=1}^{r}\left( \underset{j=1}{\overset{A-1}{\prod }}%
w_{2^{n_{j}}}\right) \left( 2^{n_{A}}K_{2^{n_{A}}}-n^{\left( A\right)
}D_{2^{n_{A}}}\right) .  \label{9a}
\end{equation}

The $\sigma $-algebra, generated by the intervals $\left\{ I_{n}\left(
x\right) :x\in G\right\} $ will be denoted by $\zeta _{n}$ $\left( n\in
\mathbb{N}\right) .$ Denote by $F=\left( F_{n},n\in \mathbb{N}\right) $ a
martingale with respect to $\zeta _{n}$ $\left( n\in \mathbb{N}\right) .$
(for details see e.g. \cite{We1}). The maximal function of a martingale $F$
is defined by
\begin{equation*}
F^{\ast }:=\sup_{n\in \mathbb{N}}\left\vert F_{n}\right\vert .
\end{equation*}

In case $f\in L_{1}\left( G\right) ,$ the maximal functions are also be
given by
\begin{equation*}
f^{\ast }\left( x\right) :=\sup\limits_{n\in \mathbb{N}}\left( \frac{1}{\mu
\left( I_{n}\left( x\right) \right) }\left\vert \int_{I_{n}\left( x\right)
}f\left( u\right) d\mu \left( u\right) \right\vert \right) .
\end{equation*}

For $0<p<\infty ,$ Hardy martingale spaces $H_{p}$ $\left( G\right) $
consist all martingales, for which
\begin{equation*}
\left\Vert F\right\Vert _{H_{p}}:=\left\Vert F^{\ast }\right\Vert
_{p}<\infty .
\end{equation*}

The best approximation of $f\in L_{p}(G)$ $(1\leq p\in \infty )$ is defined
as%
\begin{equation*}
E_{n}\left( f,L_{p}\right) :=\inf_{\psi \in \emph{p}_{n}}\left\Vert f-\psi
\right\Vert _{p},
\end{equation*}%
where $\emph{p}_{n}$ is set of all Walsh polynomials of order less than $%
n\in \mathbb{N}$.

The modulus of continuity of the function $\ f\in L_{p}\left( G\right) ,$ is
defined by

\begin{equation*}
\omega _{p}\left( 1/2^{n},f\right) :=\sup\limits_{h\in I_{n}}\left\Vert
f\left( \cdot +h\right) -f\left( \cdot \right) \right\Vert _{p}.
\end{equation*}

The concept of modulus of continuity in $H_{p}(G)$ $\left( p>0\right) $ is
defined in the following way
\begin{equation*}
\omega _{H_{p}}\left( 1/2^{n},f\right) :=\left\Vert f-S_{2^{n}}f\right\Vert
_{H_{p}}.
\end{equation*}

Since $\left\Vert f\right\Vert _{H_{p}}\sim \left\Vert f\right\Vert _{p}$,
for $p>1$, we obtain that $\omega _{H_{p}}\left( 1/2^{n},f\right) \sim
\left\Vert f-S_{2^{n}}f\right\Vert _{p},$ \ \ $p>1.$ On the other hand,
there are strong connection among this definitions:%
\begin{equation*}
\omega _{p}\left( 1/2^{n},f\right) /2\leq \left\Vert f-S_{2^{n}}f\right\Vert
_{p}\leq \omega _{p}\left( 1/2^{n},f\right) ,
\end{equation*}%
and%
\begin{equation*}
\left\Vert f-S_{2^{n}}f\right\Vert _{p}/2\leq E_{2^{n}}\left( f,L_{p}\right)
\leq \left\Vert f-S_{2^{n}}f\right\Vert _{p}.
\end{equation*}

A bounded measurable function $a$ is p-atom, if there exists an interval $I$%
, such that%
\begin{equation*}
\int_{I}ad\mu =0,\text{ \ }\left\Vert a\right\Vert _{\infty }\leq \mu \left(
I\right) ^{-1/p},\text{ \ \ supp}\left( a\right) \subset I.
\end{equation*}%
\qquad

It is easy to check that for every martingale $F=\left( F_{n},n\in \mathbb{N}%
\right) $ and every $k\in \mathbb{N}$ the limit

\begin{equation*}
\widehat{F}\left( k\right) :=\lim_{n\rightarrow \infty }\int_{G}F_{n}\left(
x\right) w_{k}\left( x\right) d\mu \left( x\right)
\end{equation*}%
exists and it is called the $k$-th Walsh-Fourier coefficients of $F.$

The Walsh-Fourier coefficients of $f\in L_{1}\left( G\right) $ are the same
as those of the martingale $\left( S_{M_{n}}\left( f\right) :n\in \mathbb{N}%
\right) $ obtained from $f$.

For the martingale $F=\sum_{n=0}^{\infty }\left( F_{n}-F_{n-1}\right) $ $\ $%
the conjugate transforms are defined as $\widetilde{F^{\left( t\right) }}=%
\overset{\infty }{\underset{n=0}{\sum }}r_{n}\left( t\right) \left(
F_{n}-F_{n-1}\right) ,$ where $t\in G$ is fixed. Note that $\widetilde{%
F^{\left( 0\right) }}=F.$ As is well known (see \cite{We1})
\begin{equation}
\left\Vert \widetilde{F^{\left( t\right) }}\right\Vert _{H_{p}}=\left\Vert
F\right\Vert _{H_{p}},\text{ \ \ }\left\Vert F\right\Vert _{H_{p}}^{p}\sim
\int_{G}\left\Vert \widetilde{F^{\left( t\right) }}\right\Vert _{p}^{p}dt,%
\text{ \ \ \ }\widetilde{\left( \sigma _{n}F\right) ^{\left( t\right) }}%
=\sigma _{n}\widetilde{F^{\left( t\right) }}.  \label{5.1}
\end{equation}

\section{Formulation of Main Results}

\begin{theorem}
\label{theorem1}a) Let $F\in H_{1/2}.$ Then there exists an absolute
constant $c,$ such that%
\begin{equation*}
\left\Vert \sigma _{n}F\right\Vert _{H_{1/2}}\leq cV^{2}\left( n\right)
\left\Vert F\right\Vert _{H_{1/2}}.
\end{equation*}

\textit{b) Let} $\left\{ n_{k}:k\geq 0\right\} $ \textit{be subsequence of
positive integers} $\mathbb{N}_{+},$ \textit{such that} $\sup_{k}V\left(
n_{k}\right) =\infty $ \textit{and} $\Phi :\mathbb{N}_{+}\rightarrow \lbrack
1,\infty )$ \textit{be any nondecreasing, nonnegative function, satisfying
conditions} $\Phi \left( n\right) \uparrow \infty $ \textit{and}
\begin{equation}
\overline{\underset{k\rightarrow \infty }{\lim }}\frac{V^{2}\left(
n_{k}\right) }{\Phi \left( n_{k}\right) }=\infty .  \label{30}
\end{equation}%
\textit{Then there exists a martingale} $F\in H_{1/2},$ \textit{such that}
\begin{equation*}
\underset{k\in \mathbb{N}}{\sup }\left\Vert \frac{\sigma _{n_{k}}F}{\Phi
\left( n_{k}\right) }\right\Vert _{1/2}=\infty .
\end{equation*}
\end{theorem}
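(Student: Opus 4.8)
The plan is to prove the two parts by quite different means: part (a) is an operator-norm upper bound obtained from atomic decomposition, and part (b) is a gliding-hump construction certifying its sharpness. For part (a) I would first strip away the martingale structure with the conjugate transforms. Since $\widetilde{(\sigma _nF)^{(t)}}=\sigma _n\widetilde{F^{(t)}}$ and $\|F\|_{H_{1/2}}^{1/2}\sim\int_G\|\widetilde{F^{(t)}}\|_{1/2}^{1/2}\,dt$ by (\ref{5.1}), it suffices to prove the pointwise-in-$t$ estimate $\|\sigma _ng\|_{1/2}\le cV^2(n)\|g\|_{H_{1/2}}$, i.e. that $\sigma _n$ maps $H_{1/2}$ into $L_{1/2}$ with norm $\lesssim V^2(n)$; integrating in $t$ and using $\|\widetilde{F^{(t)}}\|_{H_{1/2}}=\|F\|_{H_{1/2}}$ then returns the $H_{1/2}$-statement. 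Because $\sigma _n$ (for fixed $n$) is convolution with the bounded kernel $K_n$, it is $L_2$-bounded, so by Weisz's atomic lemma the operator bound reduces to one estimate on $1/2$-atoms: for every $1/2$-atom $a$ with $\mathrm{supp}\,a\subset I_M$ one must show $\int_{\overline{I_M}}|\sigma _na|^{1/2}\,d\mu\le cV(n)$, after which squaring produces the claimed $V^2(n)$.

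The analytic core of (a) is exactly this inequality. Since $\widehat a(k)=0$ for $k<2^M$, one has $\sigma _na=0$ whenever $n\le 2^M$, so only $n>2^M$ is relevant; there I would write $\sigma _na=a\ast K_n$ and insert the representation (\ref{9a}) of $nK_n$ as a sum over the blocks $2^{n_A}$, combined with the decomposition (\ref{1}) of $\overline{I_M}$ into the pieces $I_{l+1}(e_k+e_l)$ and $I_M(e_k)$. Bounding $K_{2^{n_A}}$ and $D_{2^{n_A}}$ on each such piece and integrating $|\sigma _na|^{1/2}$, the contributions telescope across runs of consecutive binary digits of $n$; this is the mechanism that replaces the naive count $r$ of nonzero digits by the variation $V(n)=n_0+\sum_k|n_k-n_{k-1}|$. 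Organizing this bookkeeping so the summed kernel bound is $O(V(n))$ rather than $O(r)$ is the main obstacle in part (a).

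For part (b) I would argue by explicit construction. Using $\overline{\lim}_k V^2(n_k)/\Phi(n_k)=\infty$, pass to a sparse subsequence $m_s:=n_{k_s}$ along which $V^2(m_s)/\Phi(m_s)\to\infty$ as fast as desired and $2^{[m_s]}$ grows rapidly. Set $F:=\sum_s\lambda _s a_s$, where each $a_s$ is a $1/2$-atom adapted to the block structure of $m_s$ and supported on a small interval $I_{N_s}$, and the weights satisfy $\sum_s\lambda _s^{1/2}<\infty$ (so $F\in H_{1/2}$ by the atomic characterization) while $\lambda _sV^2(m_s)/\Phi(m_s)\to\infty$; the sparseness makes both requirements compatible. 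Frequency separation finishes the decoupling: since $\mathrm{supp}\,a_{s'}\subset I_{N_{s'}}$ forces $\widehat{a_{s'}}(j)=0$ for $j<2^{N_{s'}}$, one gets $\sigma _{m_s}a_{s'}=0$ for $s'>s$, whereas for $s'<s$ the mean $\sigma _{m_s}a_{s'}$ equals $a_{s'}$ up to harmless terms and contributes $\le\sum_{s'}\lambda _{s'}^{1/2}<\infty$ in the $1/2$-quasinorm. By the subadditivity of $\|\cdot\|_{1/2}^{1/2}$ this isolates the diagonal term and gives $\|\sigma _{m_s}F/\Phi(m_s)\|_{1/2}^{1/2}\gtrsim(\lambda _sV^2(m_s)/\Phi(m_s))^{1/2}\to\infty$.

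The decisive step in (b), and its real difficulty, is the matching lower bound $\|\sigma _{m_s}a_s\|_{1/2}\gtrsim V^2(m_s)$, which asserts that the atom resonates with the Fej\'er kernel. I would establish it by choosing $a_s$ so that on a set of measure bounded below $\sigma _{m_s}a_s$ reproduces the large values of $K_{m_s}$ coming from the terms $\prod_j w_{2^{n_j}}\,2^{n_A}K_{2^{n_A}}$ in (\ref{9a}); the key phenomenon is that for integers whose binary expansion has many alternating runs these terms reinforce rather than cancel, so $|\sigma _{m_s}a_s|$ is comparable to $V^2(m_s)$ on a nonnegligible region. This is the exact sharpness companion of the kernel estimate in (a), and it is where the exponent $V^2$ is seen to be optimal.
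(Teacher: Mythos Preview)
Your plan for part (a) matches the paper's proof: reduce via conjugate transforms and Lemma~\ref{lemma1} to the atomic inequality $\int_{\overline{I_M}}|\sigma_na|^{1/2}\,d\mu\le cV(n)$, and then exploit the block structure of the binary expansion of $n$. The paper formalizes your ``telescoping across runs'' as a separate kernel estimate (Lemma~\ref{lemma5}), which bounds $|nK_n|$ by a sum over the $s\sim V(n)$ maximal runs $[l_A,m_A]$ of consecutive ones, each summand involving only $K_{2^{l_A}}$, $K_{2^{m_A}}$ and a geometric sum of $D_{2^k}$; each summand then integrates to $O(1)$ on $\overline{I_M}$ via the decomposition (\ref{1}). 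That lemma is precisely the missing piece you flag as ``the main obstacle''; your outline is otherwise the paper's.

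For part (b) your gliding-hump strategy is also the paper's, and the specific atoms used there are $a_k=2^{|\alpha_k|}(D_{2^{|\alpha_k|+1}}-D_{2^{|\alpha_k|}})$, with spectrum exactly in $[2^{|\alpha_k|},2^{|\alpha_k|+1})$; the paper isolates the diagonal contribution not by separating the atoms but by writing $\sigma_{\alpha_k}F$ as a bounded piece involving $\sigma_{2^{|\alpha_k|}}F$ and $S_{2^{|\alpha_k|}}F$ plus a term that reduces to $(\alpha_k-2^{|\alpha_k|})K_{\alpha_k-2^{|\alpha_k|}}$. However, your proposed mechanism for the lower bound is not correct. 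It is \emph{not} true that the terms $2^{n_A}K_{2^{n_A}}$ in (\ref{9a}) reinforce to make $|\sigma_{m_s}a_s|$ comparable to $V^2(m_s)$ on a set of measure bounded below; no such region exists. The actual source of the $V^2$ is the opposite phenomenon: by Lemma~\ref{lemma3}, for each of the $s\sim V(n)$ runs one has $n|K_n|\ge 2^{2l_i-4}$ on the small set $E_{l_i}=I_{l_i+1}(e_{l_i-1}+e_{l_i})$ of measure $\sim 2^{-l_i}$, so $\int_{E_{l_i}}|nK_n|^{1/2}\,d\mu\gtrsim 1$ for every $i$, and summing over $i$ gives $\int|nK_n|^{1/2}\,d\mu\gtrsim V(n)$. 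In other words the $L_{1/2}$ largeness comes from $\sim V(n)$ disjoint bumps of widely varying heights and supports, each worth $O(1)$, not from a single region where the function is of order $V^2$. Once you replace your heuristic by this Lemma~\ref{lemma3} computation, the rest of your decoupling argument goes through and agrees with the paper.
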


\begin{theorem}
\label{theorem2}a) Let $0<p<1/2,$ $F\in H_{p}.$ Then there exists an
absolute constant $c_{p}$, defending only on $p$, such that
\begin{equation*}
\text{ }\left\Vert \sigma _{n}F\right\Vert _{H_{p}}\leq c_{p}2^{d\left(
n\right) \left( 1/p-2\right) }\left\Vert F\right\Vert _{H_{p}}.
\end{equation*}

\textit{b) Let }$0<p<1/2$ \textit{and} $\Phi \left( n\right) $ \textit{be
any nondecreasing function,\ such that }
\begin{equation}
\sup_{k}d\left( n_{k}\right) =\infty ,\text{ \ \ }\overline{\underset{%
k\rightarrow \infty }{\lim }}\frac{2^{d\left( n_{k}\right) \left(
1/p-2\right) }}{\Phi \left( n_{k}\right) }=\infty .  \label{31aaa}
\end{equation}%
\textit{Then there exists a martingale }$F\in H_{p},$\textit{\ such that}
\begin{equation*}
\underset{k}{\sup }\left\Vert \frac{\sigma _{n_{k}}F}{\Phi \left(
n_{k}\right) }\right\Vert _{L_{p,\infty }}=\infty .
\end{equation*}
\end{theorem}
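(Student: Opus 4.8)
The plan is to treat the two parts with the atomic characterisation of $H_{p}$ together with the explicit kernel identity (\ref{9a}), and to extract in both directions the dependence on $d(n)=|n|-[n]$.

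For part a) I would first reduce the $H_{p}$-norm of the output to an $L_{p}$-estimate on atoms. By the equivalence (\ref{5.1}) the left-hand side is comparable to the $L_{p}$-averages of the conjugate transforms of $\sigma_{n}F$, and since $\sigma_{n}$ commutes with the conjugate transform, it suffices to work with $\sigma_{n}$ acting on $p$-atoms. As $\sigma_{n}$ is bounded on $L_{2}$ with norm at most $1$, Weisz's atomic criterion reduces the claim to the tail bound
\begin{equation*}
\int_{\overline{I_{N}}}\left\vert \sigma_{n}a\right\vert ^{p}d\mu \leq c_{p}2^{d(n)(1-2p)}
\end{equation*}
for every $p$-atom $a$ with $\mathrm{supp}(a)\subset I_{N}$; the contribution over $I_{N}$ itself is bounded by an absolute constant via H\"older's inequality and $\Vert \sigma _{n}a\Vert _{2}\leq \Vert a\Vert _{2}\leq 2^{N(1/p-1/2)}$, and since $2^{d(n)(1-2p)}\geq 1$ it plays no role in the exponent. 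To attack the tail I would write $\sigma _{n}a=a\ast K_{n}$ and insert the representation (\ref{9a}) for $nK_{n}$, so that $\sigma _{n}a$ becomes a sum over $A=1,\dots ,r$ of convolutions of $a$ (twisted by the modulus-one factors $\prod _{j<A}w_{2^{n_{j}}}$) with the building blocks $2^{n_{A}}K_{2^{n_{A}}}$ and $n^{(A)}D_{2^{n_{A}}}$.

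The heart of the argument is then a shell-by-shell estimate of these building blocks on the dyadic pieces of $\overline{I_{N}}$ furnished by the decomposition (\ref{1}). The Dirichlet pieces $D_{2^{n_{A}}}$ collapse to conditional expectations (partial sums), which combined with $\int _{I_{N}}a\,d\mu =0$ and $\Vert a\Vert _{\infty }\leq 2^{N/p}$ give the dominant terms, while the $K_{2^{n_{A}}}$ pieces are controlled by the $L_{1}$-boundedness of the dyadic Fej\'er kernels and their support in $I_{n_{A}}$. Summing over the scales $n_{A}$, which range over the interval between $[n]$ and $|n|$ and thus number essentially $d(n)$, one collects precisely the factor $2^{d(n)(1-2p)}$. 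I expect the main obstacle here to be the bookkeeping of scales: one must organise the sum so that the exponent comes out sharply as $d(n)(1/p-2)$ rather than a cruder bound involving $|n|$ or the digit count $r$, and obtaining exactly this dependence on $d(n)$ is the whole content of the estimate.

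For part b) I would argue by constructing an explicit counterexample. Using $\sup _{k}d(n_{k})=\infty $ and the $\limsup $ condition in (\ref{31aaa}) (note that $\Phi \geq 1$ already forces $d(n_{k})\to \infty $ along the extremal subsequence since $1/p-2>0$), I first pass to a subsequence, still written $n_{k}$, along which $d(n_{k})\to \infty $ and $2^{d(n_{k})(1/p-2)}/\Phi (n_{k})\to \infty $ as fast as desired. I then set $F=\sum _{k}\lambda _{k}a_{k}$, where each $a_{k}$ is a $p$-atom adapted to the scales $[n_{k}]$ and $|n_{k}|$ (built from differences of Dirichlet kernels and Walsh functions on $I_{|n_{k}|}$ or a translate thereof) and the coefficients obey $\sum _{k}|\lambda _{k}|^{p}<\infty $, which guarantees $F\in H_{p}$. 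Applying (\ref{9a}) again, I would show that the diagonal term $\lambda _{k}\sigma _{n_{k}}a_{k}$ is comparable in $L_{p,\infty }$ to $2^{d(n_{k})(1/p-2)}$, by exhibiting a dyadic shell of controlled measure on which $|\sigma _{n_{k}}a_{k}|$ is large, while the off-diagonal contributions $\sigma _{n_{k}}a_{j}$ ($j\neq k$) are made negligible by separating the supporting scales. Dividing by $\Phi (n_{k})$ and letting $k\to \infty $ then forces the supremum to be infinite. The main difficulty is the lower bound: one must locate the shell on which $K_{n_{k}}$ concentrates its mass and check that the twisting Walsh factors in (\ref{9a}) do not cancel there, so that the weak quasi-norm genuinely detects the full factor $2^{d(n_{k})(1/p-2)}$.
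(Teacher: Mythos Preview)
Your reduction in part a) via conjugate transforms and the atomic criterion is exactly what the paper does, and your plan for part b) is essentially the paper's construction: atoms $a_{k}=2^{|\alpha_{k}|(1/p-1)}(D_{2^{|\alpha_{k}|+1}}-D_{2^{|\alpha_{k}|}})$ with suitably summable coefficients, then a lower bound for $\sigma_{\alpha_{k}}F$ on the shell $E_{[\alpha_{k}]}=I_{[\alpha_{k}]+1}(e_{[\alpha_{k}]-1}+e_{[\alpha_{k}]})$ via (\ref{9a}) and Lemma~\ref{lemma2}. That part is fine.

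The gap is in your kernel analysis for part a). Three of your claims are incorrect and together they miss the actual source of the exponent. First, $K_{2^{n_{A}}}$ is \emph{not} supported in $I_{n_{A}}$: by Lemma~\ref{lemma2} it is nonzero on every $I_{n_{A}}(e_{t})$, $t<n_{A}$, and this spreading is the whole point. Second, the Dirichlet pieces are not dominant on $\overline{I_{M}}$; in fact $S_{2^{n_{A}}}a$ vanishes there for every $n_{A}$ (for $n_{A}\leq M$ because $\widehat{a}(j)=0$ when $j<2^{M}$, for $n_{A}>M$ because the conditional expectation is supported on $I_{M}$). Third, and most importantly, the factor $2^{d(n)(1-2p)}$ does \emph{not} come from summing $\sim d(n)$ terms in (\ref{9a}); a sum of $d(n)$ comparable terms yields a factor $d(n)$, not an exponential in $d(n)$. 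The mechanism is geometric: for $x\in I_{l+1}(e_{k}+e_{l})$ with $k<l<[n]$ every building block in (\ref{9a}) vanishes (each $n_{A}\geq [n]>l$ forces $K_{2^{n_{A}}}(x+t)=0$ and $D_{2^{n_{A}}}(x+t)=0$), so $\sigma_{n}a(x)=0$ on all low shells. On the remaining shells with $l\geq [n]$ one has the size bound $\int_{I_{M}}|K_{n}(x+t)|\,d\mu(t)\leq c\,2^{k+l-2M}$ (this is Goginava's Lemma~\ref{lemma4}, which the paper invokes directly rather than decomposing via (\ref{9a})), and it is the restriction of the shell sum in (\ref{1}) to $l\geq [n]$, combined with $|n|\geq M$, that produces exactly $2^{[n](1/p-2)}$ and hence $2^{d(n)(1-2p)}$ after normalisation. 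Your bookkeeping should track the vanishing region, not the number of binary digits.
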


\begin{theorem}
\label{theorem3}a) Let $F\in H_{1/2}$, $\sup_{k}V\left( n_{k}\right) =\infty
$ and
\begin{equation}
\omega _{H_{p}}\left( 1/2^{\left\vert n_{k}\right\vert },F\right) =o\left(
1/V^{2}\left( n_{k}\right) \right) ,\text{ as \ }k\rightarrow \infty .
\label{cond}
\end{equation}%
Then (\ref{fe2}) holds, for $p=1/2.$

b) Let $\sup_{k}V\left( n_{k}\right) =\infty .$ Then there exists a
martingale $f\in H_{1/2}(G),$\ \ for which
\begin{equation}
\omega _{H_{1/2}}\left( 1/2^{\left\vert n_{k}\right\vert },F\right) =O\left(
1/V^{2}\left( n_{k}\right) \right) ,\text{ \ as \ }k\rightarrow \infty
\label{cond2}
\end{equation}%
and
\begin{equation}
\left\Vert \sigma _{n_{k}}F-F\right\Vert _{1/2}\nrightarrow 0,\,\,\,\text{%
as\thinspace \thinspace \thinspace }k\rightarrow \infty .  \label{kn3}
\end{equation}
\end{theorem}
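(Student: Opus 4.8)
The plan is to prove part a) by the familiar three-term splitting at the dyadic scale $N:=2^{\left\vert n_{k}\right\vert }$, exploiting the quasi-norm inequality $\Vert u+v\Vert _{1/2}^{1/2}\leq \Vert u\Vert _{1/2}^{1/2}+\Vert v\Vert _{1/2}^{1/2}$ and the elementary bound $\Vert \cdot \Vert _{1/2}\leq \Vert \cdot \Vert _{H_{1/2}}$. Writing $F=S_{N}F+(F-S_{N}F)$, I would decompose
\[
\sigma _{n_{k}}F-F=\sigma _{n_{k}}\left( F-S_{N}F\right) +\left( \sigma _{n_{k}}S_{N}F-S_{N}F\right) +\left( S_{N}F-F\right) .
\]
The first term (Term A) is controlled by Theorem \ref{theorem1} a): $\Vert \sigma _{n_{k}}(F-S_{N}F)\Vert _{1/2}\leq cV^{2}(n_{k})\,\omega _{H_{1/2}}(1/2^{\left\vert n_{k}\right\vert },F)$, which by (\ref{cond}) equals $cV^{2}(n_{k})\cdot o(1/V^{2}(n_{k}))=o(1)\rightarrow 0$. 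The third term (Term C) is $\Vert S_{N}F-F\Vert _{1/2}\leq \omega _{H_{1/2}}(1/2^{\left\vert n_{k}\right\vert },F)\rightarrow 0$. Thus all the analytic weight in part a) sits in Theorem \ref{theorem1} a), which is assumed, and I expect no genuine obstacle here.

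The only point needing a small computation is the middle term (Term B). Since $S_{N}F$ has spectrum in $[0,N)$ and $N\leq n_{k}$, for every $m\geq N$ one has $\sigma _{m}(S_{N}F)-S_{N}F=-\frac{1}{m}\sum_{j=0}^{N-1}j\widehat{F}(j)w_{j}$; comparing the cases $m=n_{k}$ and $m=N$ (for which $\sigma _{N}S_{N}F=\sigma _{N}F$) gives
\[
\sigma _{n_{k}}S_{N}F-S_{N}F=\frac{2^{\left\vert n_{k}\right\vert }}{n_{k}}\left( \sigma _{2^{\left\vert n_{k}\right\vert }}F-S_{2^{\left\vert n_{k}\right\vert }}F\right) ,\qquad \frac{2^{\left\vert n_{k}\right\vert }}{n_{k}}\leq 1.
\]
Because $\sigma _{2^{m}}F\rightarrow F$ in $H_{1/2}$ (by the uniform bound (\ref{77}) and density of Walsh polynomials) and $S_{2^{m}}F\rightarrow F$ in $H_{1/2}$, the difference $\sigma _{2^{m}}F-S_{2^{m}}F\rightarrow 0$ in $L_{1/2}$, so Term B $\rightarrow 0$ as well; here one may assume $n_{k}\rightarrow \infty $, which is automatic on the indices carrying the divergence since $V(n)\leq 1+\left\vert n\right\vert $. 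Collecting the three estimates proves (\ref{fe2}) for $p=1/2$.

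For part b) I would construct a single martingale realizing the borderline rate $O(1/V^{2}(n_{k}))$ while defeating convergence, by a resonance (gliding-hump) construction adapted from the sharpness part of Theorem \ref{theorem1}. After passing to a rapidly increasing subsequence $\{n_{k_{j}}\}$ with $V(n_{k_{j}})\rightarrow \infty $ and lacunary scales $\left\vert n_{k_{j}}\right\vert $, I would set $F=\sum_{j}\lambda _{j}a_{j}$, where $a_{j}$ is a $1/2$-atom supported on $I_{\left\vert n_{k_{j}}\right\vert }$ with spectrum concentrated near $2^{\left\vert n_{k_{j}}\right\vert }$ and $\lambda _{j}\asymp 1/V^{2}(n_{k_{j}})$. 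The disjointness and lacunarity of the frequency blocks give $\Vert F\Vert _{H_{1/2}}<\infty $ and, crucially, $\omega _{H_{1/2}}(1/2^{\left\vert n_{k_{j}}\right\vert },F)=\Vert F-S_{2^{\left\vert n_{k_{j}}\right\vert }}F\Vert _{H_{1/2}}\asymp \lambda _{j}=O(1/V^{2}(n_{k_{j}}))$, matching (\ref{cond2}).

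The heart of part b), and the main obstacle, is the lower estimate $\Vert \sigma _{n_{k_{j}}}(\lambda _{j}a_{j})\Vert _{1/2}\gtrsim \lambda _{j}V^{2}(n_{k_{j}})\asymp 1$, which forces (\ref{kn3}) once the remaining packets are shown (by Term-A/Term-B type bounds) to contribute negligibly. This amplification by $V^{2}(n_{k_{j}})$ is precisely the sharp constant of Theorem \ref{theorem1} a); to extract it I would expand $n_{k_{j}}K_{n_{k_{j}}}$ via formula (\ref{9a}), use the splitting (\ref{1}) of $\overline{I_{M}}$, and show that on a set of measure $\asymp 2^{-\left\vert n_{k_{j}}\right\vert }$ the kernel $K_{n_{k_{j}}}$ is as large as $\asymp V^{2}(n_{k_{j}})2^{\left\vert n_{k_{j}}\right\vert }$, so that the $L_{1/2}$ quasi-norm (which rewards concentration) picks up the square of the variation. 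Establishing this kernel lower bound on a set of the correct size is the delicate technical step; everything else is bookkeeping that parallels the estimates already developed for Theorems \ref{theorem1} and \ref{theorem2}.
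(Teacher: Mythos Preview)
Your argument for part a) is correct and coincides with the paper's: the same three-term splitting at $N=2^{|n_k|}$, the same use of Theorem~\ref{theorem1}\,a) on $\sigma_{n_k}(F-S_NF)$, and the same identity $\sigma_{n_k}S_NF-S_NF=\tfrac{N}{n_k}(\sigma_NF-S_NF)=\tfrac{N}{n_k}S_N(\sigma_NF-F)$ handled via (\ref{77}).

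For part b) your overall architecture (a lacunary atomic martingale with weights $\lambda_j\asymp 1/V^2(\alpha_j)$, then reduction to a lower bound on $(\alpha_j-2^{|\alpha_j|})K_{\alpha_j-2^{|\alpha_j|}}$) also matches the paper's. The gap is in the lower-bound mechanism you propose. You assert that on a set of measure $\asymp 2^{-|n_{k_j}|}$ one has $|K_{n_{k_j}}|\asymp V^2(n_{k_j})\,2^{|n_{k_j}|}$, but this is impossible: from $|D_k|\le k$ one gets $|K_n(x)|\le\tfrac{1}{n}\sum_{k=1}^n|D_k(x)|\le (n+1)/2<2^{|n|+1}$, so the Fej\'er kernel can never exceed the order $2^{|n|}$, let alone $V^2(n)\,2^{|n|}$. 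Concentration on a single small set therefore cannot manufacture the factor $V$ in $\int|n'K_{n'}|^{1/2}d\mu$; in fact the $L_{1/2}$ functional $f\mapsto\int|f|^{1/2}$ penalises concentration rather than rewarding it.

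The paper extracts the lower bound by a genuinely multi-scale argument, encoded in Lemma~\ref{lemma3}: writing $n'=\alpha_k-2^{|\alpha_k|}=\sum_{i=1}^{s_k}\sum_{j=l_i}^{m_i}2^j$, one has $n'|K_{n'}(x)|\ge 2^{2l_i-4}$ on the disjoint sets $E_{l_i}=I_{l_i+1}(e_{l_i-1}+e_{l_i})$, each of measure $\asymp 2^{-l_i}$. Every such set contributes $\asymp 2^{l_i}\cdot 2^{-l_i}=1$ to $\int|n'K_{n'}|^{1/2}d\mu$, and there are $s_k\asymp V(\alpha_k)$ of them, giving $\int|n'K_{n'}|^{1/2}d\mu\gtrsim V(\alpha_k)$. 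After dividing by the $V(\alpha_k)$ coming from $\lambda_k^{1/2}$, this yields the uniform positive lower bound for $\|\sigma_{\alpha_k}F-F\|_{1/2}^{1/2}$. So the missing idea in your proposal is that the $V$-amplification in $L_{1/2}$ arises from \emph{many} moderately large values of $n'K_{n'}$ at different dyadic scales, not from a single extreme value on one small set.
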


\begin{theorem}
\label{theorem4}a) Let $0<p<1/2,$ $F\in H_{p}$, $\ \sup_{k}d\left(
n_{k}\right) =\infty $ and
\begin{equation}
\omega _{H_{p}}\left( 1/2^{\left\vert n_{k}\right\vert },F\right) =o\left(
1/2^{d\left( n_{k}\right) \left( 1/p-2\right) }\right) ,\text{ as \ }%
k\rightarrow \infty .  \label{cond3}
\end{equation}%
Then (\ref{fe2}) holds.

b) Let $\sup_{k}d\left( n_{k}\right) =\infty .$ Then there exists a
martingale $F\in H_{p}(G)$ $\left( 0<p<1/2\right) ,$\ \ for which
\begin{equation}
\omega _{H_{p}}\left( 1/2^{\left\vert n_{k}\right\vert },F\right) =O\left(
1/2^{d\left( n_{k}\right) \left( 1/p-2\right) }\right) ,\text{ \ as \ }%
k\rightarrow \infty  \label{cond4}
\end{equation}%
and
\begin{equation}
\left\Vert \sigma _{n_{k}}F-F\right\Vert _{L_{p,\infty }}\nrightarrow
0,\,\,\,\text{as\thinspace \thinspace \thinspace }k\rightarrow \infty .
\label{kn4}
\end{equation}
\end{theorem}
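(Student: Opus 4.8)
The plan is to handle the two parts by different means, with Theorem~\ref{theorem2} and (\ref{77}) as the quantitative inputs. For part a), write $m=\left\vert n_{k}\right\vert $, $G_{k}=S_{2^{m}}F$ and split
\[
\sigma _{n_{k}}F-F=\sigma _{n_{k}}\left( F-G_{k}\right) +\left( \sigma _{n_{k}}G_{k}-G_{k}\right) +\left( G_{k}-F\right) .
\]
Condition (\ref{cond3}) is used only on the first summand: since $\left\Vert \cdot \right\Vert _{p}\leq \left\Vert \cdot \right\Vert _{H_{p}}$ and $\left\Vert F-G_{k}\right\Vert _{H_{p}}=\omega _{H_{p}}(1/2^{m},F)$, Theorem~\ref{theorem2}a) bounds it by $c_{p}2^{d(n_{k})(1/p-2)}\omega _{H_{p}}(1/2^{m},F)$, which tends to $0$ precisely because (\ref{cond3}) is a little-$o$ estimate. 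The third summand is $\leq \omega _{H_{p}}(1/2^{m},F)\to 0$, using $2^{-d(n_{k})(1/p-2)}\leq 1$. For the middle summand I would exploit that $2^{m}\leq n_{k}<2^{m+1}$ and $S_{j}G_{k}=G_{k}$ for $j\geq 2^{m}$, which yield the identity
\[
\sigma _{n_{k}}G_{k}-G_{k}=\frac{2^{m}}{n_{k}}\left( \sigma _{2^{m}}F-S_{2^{m}}F\right) ,
\]
and then invoke (\ref{77}): uniform boundedness of $\sigma _{2^{m}}$ on $H_{p}$ together with $\sigma _{2^{m}}P\to P$ for Walsh polynomials $P$ gives $\sigma _{2^{m}}F\to F$ in $H_{p}$, so the middle summand is $\leq \left\Vert \sigma _{2^{m}}F-F\right\Vert _{p}+\omega _{H_{p}}(1/2^{m},F)\to 0$. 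This proves (\ref{fe2}).

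For part b) the idea is to force the sharpness of Theorem~\ref{theorem2}a) at infinitely many scales at once. Using $\sup _{k}d(n_{k})=\infty $ I would extract a sparse subsequence $\{n_{k_{j}}\}$ with $d(n_{k_{j}})\uparrow \infty $ and with the scales $\left\vert n_{k_{j}}\right\vert $ so rapidly increasing that the spectral blocks $[2^{\left\vert n_{k_{j-1}}\right\vert },2^{\left\vert n_{k_{j}}\right\vert })$ are pairwise disjoint. I then set
\[
F=\sum_{j}\lambda _{j}a_{j},\qquad \lambda _{j}\sim 2^{-d(n_{k_{j}})(1/p-2)},
\]
where each $a_{j}$ is a Walsh polynomial whose spectrum lies in the $j$-th block, normalised to be (a fixed multiple of) a $p$-atom adapted to the bit-pattern of $n_{k_{j}}$, and chosen in the spirit of the extremal example behind Theorem~\ref{theorem2}b) so that $\sigma _{n_{k_{j}}}a_{j}$ is pointwise large. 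The membership $F\in H_{p}$ then follows from the atomic characterisation of $H_{p}$ since $\sum _{j}\lambda _{j}^{p}<\infty $, a consequence of $d(n_{k_{j}})(1/p-2)\to \infty $.

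The modulus bound (\ref{cond4}) I would read off from the spectral localisation: because the blocks are disjoint and ordered, $S_{2^{\left\vert n_{k_{i}}\right\vert }}F$ equals the partial sum of the first atoms, so $F-S_{2^{\left\vert n_{k_{i}}\right\vert }}F$ is the tail $\sum _{j\geq i}\lambda _{j}a_{j}$, whose $H_{p}$-quasinorm is controlled by its leading coefficient $\lambda _{i}\sim 2^{-d(n_{k_{i}})(1/p-2)}$. For the indices $k$ outside the subsequence one uses that $\omega _{H_{p}}(1/2^{m},F)$ is nonincreasing in $m$ and that the right-hand side $2^{-d(n_{k})(1/p-2)}$ of (\ref{cond4}) is bounded below whenever $d(n_{k})$ is small, so the $O$-bound is automatic there.

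The hard part is the lower bound (\ref{kn4}). I would isolate the dominant term and show $\left\Vert \sigma _{n_{k_{j}}}F-F\right\Vert _{L_{p,\infty }}\geq \left\Vert \sigma _{n_{k_{j}}}(\lambda _{j}a_{j})\right\Vert _{L_{p,\infty }}-o(1)\geq c>0$, the point being that $\lambda _{j}\sim 2^{-d(n_{k_{j}})(1/p-2)}$ exactly cancels the gain $2^{d(n_{k_{j}})(1/p-2)}$ produced by $\sigma _{n_{k_{j}}}a_{j}$. Establishing this gain is the crux: it requires a pointwise lower estimate for the Fej\'er kernel $K_{n_{k_{j}}}$, obtained by inserting the decomposition (\ref{9a}) and evaluating it on the sets $I_{l+1}(e_{k}+e_{l})$ that appear in the partition (\ref{1}) of $\overline{I_{M}}$, where the interaction between the top bit $\left\vert n_{k_{j}}\right\vert $ and the bottom bit $[n_{k_{j}}]$ makes the kernel large on a set of the right measure. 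Controlling the contribution of all the other atoms, and of $F$ itself, on this small set is the delicate estimate that ultimately shows the $O$-form of (\ref{cond3}) cannot be improved to guarantee (\ref{fe2}).
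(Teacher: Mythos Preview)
Your approach coincides with the paper's in both parts. Part a) is carried out exactly via your three-term splitting (the paper simply refers to the parallel computation in the proof of Theorem~\ref{theorem3}, using Theorem~\ref{theorem2}a) in place of Theorem~\ref{theorem1}a) and invoking (\ref{77}) for the middle term). For part b) the paper likewise extracts a sparse subsequence $\{\alpha_k\}\subset\{n_k\}$ with $d(\alpha_k)\uparrow\infty$ and sets $F=\sum_k 2^{-d(\alpha_k)(1/p-2)}a_k^{(p)}$. The one point to adjust in your sketch is that the atoms are \emph{not} adapted to the bit-pattern of $\alpha_k$: they are simply the dyadic blocks $a_k^{(p)}=2^{|\alpha_k|(1/p-1)}\bigl(D_{2^{|\alpha_k|+1}}-D_{2^{|\alpha_k|}}\bigr)$, depending only on $|\alpha_k|$. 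The bit-pattern enters solely through the kernel lower bound: Lemma~\ref{lemma3} gives $\bigl(\alpha_k-2^{|\alpha_k|}\bigr)\bigl|K_{\alpha_k-2^{|\alpha_k|}}\bigr|\geq 2^{2[\alpha_k]-4}$ on the single set $E_{[\alpha_k]}=I_{[\alpha_k]+1}(e_{[\alpha_k]-1}+e_{[\alpha_k]})$ of measure $\asymp 2^{-[\alpha_k]}$, and after the decomposition $\sigma_{\alpha_k}F-F=\tfrac{2^{|\alpha_k|}}{\alpha_k}(\sigma_{2^{|\alpha_k|}}F-F)+\tfrac{\alpha_k-2^{|\alpha_k|}}{\alpha_k}(S_{2^{|\alpha_k|}}F-F)+\text{(kernel term)}$ this directly yields the weak-$L_p$ lower bound, with the first two pieces controlled by (\ref{77}) and $F\in H_p$. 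Your remark about verifying (\ref{cond4}) at indices $k$ outside the chosen subsequence is well placed; the paper in fact checks it only along $\{\alpha_k\}$.
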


\section{AUXILIARY PROPOSITIONS}

\begin{lemma}[\textbf{Weisz \protect\cite{We3} (see also Simon \protect\cite%
{S})}]
\label{lemma0}A martingale $F=\left( F_{n},\text{ }n\in \mathbb{N}\right) $
is in $H_{p}\left( 0<p\leq 1\right) $ if and only if there exists a sequence
$\left( a_{k},k\in \mathbb{N}\right) $ of p-atoms and a sequence $\left( \mu
_{k},k\in \mathbb{N}\right) $ of a real numbers, such that for every $n\in
\mathbb{N}$
\end{lemma}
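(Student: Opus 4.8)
The plan is to prove both implications of the atomic characterization, establishing as well the norm equivalence $\left\Vert F\right\Vert _{H_{p}}^{p}\sim \inf \sum_{k}\left\vert \mu _{k}\right\vert ^{p}$, where the infimum runs over all admissible decompositions $F_{n}=\sum_{k}\mu _{k}S_{2^{n}}a_{k}$. The sufficiency direction is the routine half; the necessity direction, which produces the atoms, is where the real work lies.

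First I would dispose of sufficiency. Suppose $F$ admits such a decomposition. Since $0<p\leq 1$, the map $t\mapsto t^{p}$ is subadditive, so it suffices to control the maximal function of a single $p$-atom. If $a$ is a $p$-atom supported on a dyadic interval $I$ with $\mu \left( I\right) =2^{-N}$, then $S_{2^{n}}a\left( x\right) =2^{n}\int_{I_{n}\left( x\right) }a\,d\mu $ by \eqref{1dn}; the vanishing integral of $a$ forces $S_{2^{n}}a=0$ for $n\leq N$, while for $n>N$ the sum $S_{2^{n}}a$ is supported on $I$ with $\left\vert S_{2^{n}}a\right\vert \leq \left\Vert a\right\Vert _{\infty }\leq \mu \left( I\right) ^{-1/p}$. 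Hence $\sup_{n}\left\vert S_{2^{n}}a\right\vert \leq \mu \left( I\right) ^{-1/p}\mathbf{1}_{I}$ and $\int_{G}\left( \sup_{n}\left\vert S_{2^{n}}a\right\vert \right) ^{p}d\mu \leq 1$, uniformly in $a$. Summing and using subadditivity yields $\left\Vert F\right\Vert _{H_{p}}^{p}\leq c\sum_{k}\left\vert \mu _{k}\right\vert ^{p}$.

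The necessity direction, which I expect to be the main obstacle, requires manufacturing atoms from an arbitrary $F\in H_{p}$ by a stopping-time construction. For each integer $k$ set $\tau _{k}:=\inf \{n\in \mathbb{N}:\left\vert F_{n}\right\vert >2^{k}\}$, a stopping time with respect to $\left( \zeta _{n}\right) $, and let $F^{\tau _{k}}$ denote the stopped martingale $F_{n}^{\tau _{k}}=F_{n\wedge \tau _{k}}$. Because $\{\tau _{k}<\infty \}=\{F^{\ast }>2^{k}\}$ and $F^{\ast }\in L_{p}$, the telescoping identity $F_{n}=\sum_{k}\left( F_{n}^{\tau _{k+1}}-F_{n}^{\tau _{k}}\right) $ holds pointwise. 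I would then set $\mu _{k}:=c\,2^{k}\mu \left( \{F^{\ast }>2^{k}\}\right) ^{1/p}$ and $a_{k}:=\mu _{k}^{-1}\left( F^{\tau _{k+1}}-F^{\tau _{k}}\right) $, so that $F_{n}^{\tau _{k+1}}-F_{n}^{\tau _{k}}=\mu _{k}S_{2^{n}}a_{k}$.

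The crux is to verify that each $a_{k}$ is (a multiple of) a $p$-atom supported on $\{F^{\ast }>2^{k}\}$. The cancellation $\int a_{k}\,d\mu =0$ over each maximal dyadic interval on which $\tau _{k}$ is constant follows from the martingale-difference structure; the size bound $\left\Vert a_{k}\right\Vert _{\infty }\leq \mu \left( \{F^{\ast }>2^{k}\}\right) ^{-1/p}$ rests on the estimate $\left\vert F_{n}^{\tau _{k+1}}-F_{n}^{\tau _{k}}\right\vert \leq c\,2^{k}$, which in turn exploits the regularity of the dyadic filtration (each atom of $\zeta _{n}$ splitting into two atoms of $\zeta _{n+1}$) together with the fact that on $\{\tau _{k}=m\}$ the predecessor value satisfies $\left\vert F_{m-1}\right\vert \leq 2^{k}$. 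Decomposing $\{F^{\ast }>2^{k}\}$ into its maximal dyadic components reduces $a_{k}$ to a genuine sum of atoms. Finally, the coefficient estimate
\[
\sum_{k}\left\vert \mu _{k}\right\vert ^{p}=c^{p}\sum_{k}2^{kp}\mu \left( \{F^{\ast }>2^{k}\}\right) \leq c^{\prime }\int_{G}\left( F^{\ast }\right) ^{p}d\mu =c^{\prime }\left\Vert F\right\Vert _{H_{p}}^{p}
\]
follows from the standard dyadic discretization of the layer-cake formula, completing the reverse inequality and hence the full equivalence.
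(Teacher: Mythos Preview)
The paper does not give its own proof of this lemma: it is stated in the section of auxiliary propositions with an attribution to Weisz~\cite{We3} and Simon~\cite{S}, and is simply quoted as a known result to be used later. So there is nothing in the paper to compare your argument against line by line.

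That said, your outline is essentially the standard proof one finds in those references. The sufficiency half is exactly as you wrote it. For necessity, the stopping-time construction $\tau_{k}=\inf\{n:\left\vert F_{n}\right\vert>2^{k}\}$ and the telescoping $F=\sum_{k}(F^{\tau_{k+1}}-F^{\tau_{k}})$ is precisely Weisz's approach; the regularity of the dyadic filtration (each $\zeta_{n}$-atom has measure $2^{-n}$ and splits into two $\zeta_{n+1}$-atoms of equal measure) is what makes the $L_{\infty}$ bound $\left\vert F^{\tau_{k+1}}_{n}-F^{\tau_{k}}_{n}\right\vert\leq c\,2^{k}$ go through, and you correctly flag this. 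The only point worth stressing is the one you already note in passing: the function $\mu_{k}^{-1}(F^{\tau_{k+1}}-F^{\tau_{k}})$ is supported on the open set $\{F^{\ast}>2^{k}\}$, not on a single dyadic interval, so it is not itself a $p$-atom but a sum of $p$-atoms over the maximal dyadic subintervals of that set; one then has to redistribute the coefficients $\mu_{k}$ accordingly and check that the $\ell^{p}$ sum is still controlled by $\left\Vert F^{\ast}\right\Vert_{p}^{p}$. With that bookkeeping made explicit, your sketch matches the cited proof.
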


\begin{equation}
\qquad \sum_{k=0}^{\infty }\mu _{k}S_{2^{n}}a_{k}=F_{n},\text{ \ \ \ }%
\sum_{k=0}^{\infty }\left\vert \mu _{k}\right\vert ^{p}<\infty .  \label{6}
\end{equation}

Moreover, $\left\Vert F\right\Vert _{H_{p}}\backsim \inf \left(
\sum_{k=0}^{\infty }\left\vert \mu _{k}\right\vert ^{p}\right) ^{1/p}$,
where the infimum is taken over all decomposition of $F$ of the form (\ref{6}%
).

\begin{lemma}[\textbf{Weisz \protect\cite{We1}}]
\label{lemma1}Suppose that an operator $T$ is $\sigma $-linear and
\end{lemma}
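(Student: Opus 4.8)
The plan is to reduce the boundedness of $T$ on all of $H_{p}$ to a single uniform estimate on an arbitrary $p$-atom, exploiting the atomic decomposition furnished by Lemma \ref{lemma0}. By that lemma any $F\in H_{p}$ can be written as $F=\sum_{k=0}^{\infty}\mu_{k}a_{k}$, where each $a_{k}$ is a $p$-atom supported on some dyadic interval $I_{k}$ and $\sum_{k}\left\vert \mu_{k}\right\vert ^{p}\backsim \left\Vert F\right\Vert _{H_{p}}^{p}$. Since $T$ is $\sigma$-linear it controls this convergent series pointwise, giving $\left\vert TF\right\vert \leq \sum_{k}\left\vert \mu_{k}\right\vert \left\vert Ta_{k}\right\vert$ a.e.; and because $0<p\leq 1$ the elementary inequality $\left( \sum b_{k}\right) ^{p}\leq \sum b_{k}^{p}$ upgrades this to $\left\vert TF\right\vert ^{p}\leq \sum_{k}\left\vert \mu_{k}\right\vert ^{p}\left\vert Ta_{k}\right\vert ^{p}$. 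Integrating over $G$ yields $\left\Vert TF\right\Vert _{p}^{p}\leq \sum_{k}\left\vert \mu_{k}\right\vert ^{p}\int_{G}\left\vert Ta_{k}\right\vert ^{p}d\mu$, so it suffices to bound $\int_{G}\left\vert Ta\right\vert ^{p}d\mu$ by an absolute constant, uniformly over all $p$-atoms $a$.

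The second step is to carry out this uniform estimate by splitting $G$ into the support $I$ of the atom and its complement $\overline{I}$. On $\overline{I}$ the bound $\int_{\overline{I}}\left\vert Ta\right\vert ^{p}d\mu \leq c_{p}$ is precisely the hypothesis of the lemma. On $I$ I invoke the assumed boundedness of $T$ on $L_{p_{1}}$ for some fixed $1<p_{1}\leq \infty$: together with the normalization $\left\Vert a\right\Vert _{\infty }\leq \mu \left( I\right) ^{-1/p}$ built into the definition of a $p$-atom, this gives $\left\Vert Ta\right\Vert _{p_{1}}\leq c\left\Vert a\right\Vert _{p_{1}}\leq c\,\mu \left( I\right) ^{1/p_{1}-1/p}$, and an application of H\"older's inequality on the finite-measure set $I$ produces $\int_{I}\left\vert Ta\right\vert ^{p}d\mu \leq \left\Vert Ta\right\Vert _{p_{1}}^{p}\mu \left( I\right) ^{1-p/p_{1}}\leq c^{p}$. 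Adding the two contributions gives $\int_{G}\left\vert Ta\right\vert ^{p}d\mu \leq c_{p}+c^{p}=:c_{p}^{\prime }$, a bound independent of the particular atom $a$.

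Combining the two steps, $\left\Vert TF\right\Vert _{p}^{p}\leq c_{p}^{\prime }\sum_{k}\left\vert \mu_{k}\right\vert ^{p}$; taking the infimum over all admissible atomic decompositions of $F$ and invoking the norm equivalence in Lemma \ref{lemma0} replaces the right-hand side by $c_{p}^{\prime }\left\Vert F\right\Vert _{H_{p}}^{p}$, and extracting $p$-th roots yields the asserted inequality $\left\Vert TF\right\Vert _{p}\leq c_{p}^{\prime \prime }\left\Vert F\right\Vert _{H_{p}}$.

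The step I expect to demand the most care is the first one, namely the passage $\left\vert TF\right\vert \leq \sum_{k}\left\vert \mu_{k}\right\vert \left\vert Ta_{k}\right\vert$. This is exactly where $\sigma$-linearity does the essential work, and one must check that the convergence of the atomic series in the $H_{p}$-norm transfers to a genuine pointwise (a.e.) domination of $TF$ by the series of $\left\vert Ta_{k}\right\vert$, so that the subsequent term-by-term integration and interchange of $T$ with the infinite sum are legitimate. Once this and the definition of a $p$-atom are in place, the remaining support/complement estimates are routine.
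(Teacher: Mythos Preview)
Your proof is correct and follows the standard atomic-decomposition argument for this result. Note, however, that the paper does not actually prove Lemma \ref{lemma1}: it is quoted as a known result from Weisz \cite{We1} and used as a black box, so there is no ``paper's own proof'' to compare against. Your argument is precisely the one found in Weisz's monograph, with the minor cosmetic difference that you phrase the local estimate on $I$ for a general auxiliary exponent $1<p_{1}\leq\infty$, whereas the lemma as stated here assumes specifically $L_{\infty}\to L_{\infty}$ boundedness (i.e.\ $p_{1}=\infty$); your H\"older step of course specializes correctly to that case. The caveat you flag about justifying the pointwise inequality $\left\vert TF\right\vert \leq \sum_{k}\left\vert \mu_{k}\right\vert \left\vert Ta_{k}\right\vert$ from $\sigma$-sublinearity is indeed the only delicate point, and it is handled in the cited reference by observing that the atomic sum converges in $L_{1}$ (or in the sense of martingales) so that $T$ may be applied termwise.
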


\begin{equation*}
\int\limits_{\overset{-}{I}}\left\vert Ta\right\vert ^{p}d\mu \leq
c_{p}<\infty ,\text{ \ \ }\left( 0<p\leq 1\right)
\end{equation*}%
\textit{for every }$p$\textit{-atom }$a$\textit{,where }$I$\textit{\ denote
the support of the atom. If }$T$\textit{\ is bounded from }$L_{\infty \text{
}}$\textit{\ to }$L_{\infty },$\textit{\ then} \
\begin{equation*}
\left\Vert TF\right\Vert _{p}\leq c_{p}\left\Vert F\right\Vert _{H_{p}}.
\end{equation*}

\begin{lemma}[\textbf{see e.g. \protect\cite{G-E-S}, \protect\cite{sws}}]
\label{lemma2}Let $t,n\in \mathbb{N}.$ Then
\begin{equation*}
K_{2^{n}}\left( x\right) =\left\{
\begin{array}{c}
\text{ }2^{t-1},\text{\ if \ \ }x\in I_{n}\left( e_{t}\right) ,\text{ }n>t,%
\text{\ }x\in I_{t}\backslash I_{t+1}, \\
\left( 2^{n}+1\right) /2,\text{\ if \ \ }x\in I_{n}, \\
0,\text{\ otherwise.\ }%
\end{array}%
\right.
\end{equation*}
\end{lemma}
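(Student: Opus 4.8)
The plan is to derive a simple recursion relating $K_{2^{n}}$ to $K_{2^{n-1}}$ and then verify the closed form by induction on $n$. The starting point is the splitting identity for Walsh--Dirichlet kernels. Since $w_{2^{n-1}+i}=r_{n-1}w_{i}$ for $0\leq i<2^{n-1}$, splitting the defining sum $D_{2^{n-1}+k}=\sum_{j=0}^{2^{n-1}+k-1}w_{j}$ at the index $2^{n-1}$ yields
\begin{equation*}
D_{2^{n-1}+k}=D_{2^{n-1}}+r_{n-1}D_{k},\qquad 0\leq k\leq 2^{n-1}.
\end{equation*}
Summing this over $k=1,\dots ,2^{n-1}$ and recalling $2^{n}K_{2^{n}}=\sum_{k=1}^{2^{n}}D_{k}$, I would split the sum defining $2^{n}K_{2^{n}}$ at $2^{n-1}$ and substitute, obtaining the key recursion
\begin{equation*}
2^{n}K_{2^{n}}=\left( 1+r_{n-1}\right) 2^{n-1}K_{2^{n-1}}+2^{n-1}D_{2^{n-1}},
\end{equation*}
equivalently $K_{2^{n}}=\tfrac{1}{2}\left( 1+r_{n-1}\right) K_{2^{n-1}}+\tfrac{1}{2}D_{2^{n-1}}$.

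Next I would prove the stated three-case formula by induction on $n$. The base case $n=0$ is immediate, since $K_{1}=D_{1}=w_{0}=1=(2^{0}+1)/2$ and only the region $x\in I_{0}=G$ occurs. For the inductive step I would perform a case analysis driven by the two quantities appearing in the recursion: the sign of $r_{n-1}(x)=(-1)^{x_{n-1}}$, and whether $x\in I_{n-1}$ (which by \eqref{1dn} decides whether $D_{2^{n-1}}(x)=2^{n-1}$ or $0$). When $x_{n-1}=1$ the factor $1+r_{n-1}$ vanishes and $K_{2^{n}}(x)=\tfrac{1}{2}D_{2^{n-1}}(x)$, giving either $2^{n-2}$ (when $x\in I_{n-1}$, i.e.\ $x\in I_{n}(e_{n-1})$) or $0$. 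When $x_{n-1}=0$ one has $K_{2^{n}}(x)=K_{2^{n-1}}(x)+\tfrac{1}{2}D_{2^{n-1}}(x)$, and appending the digit $x_{n-1}=0$ does not change which of the three regions $x$ belongs to, so the inductive hypothesis transfers directly except in the deepest region.

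The one place requiring genuine care is the region $x\in I_{n}$ (first $n$ digits all zero): there the inductive hypothesis gives $K_{2^{n-1}}(x)=(2^{n-1}+1)/2$ and $D_{2^{n-1}}(x)=2^{n-1}$, so I must check the arithmetic $(2^{n-1}+1)/2+2^{n-1}/2=(2^{n}+1)/2$ closes the induction in that case. The main obstacle is therefore bookkeeping rather than depth: establishing the Dirichlet splitting identity correctly, and then tracking how the single coordinate $x_{n-1}$ moves $x$ among the three regions $\{x\in I_{n}\}$, $\{x\in I_{n}(e_{t}),\ t<n\}$, and the remaining set where at least two of the first $n$ digits are nonzero (on which the kernel is $0$). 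Once the case analysis is organized around $x_{n-1}$ and membership in $I_{n-1}$, each branch reduces to a one-line verification against the inductive hypothesis and \eqref{1dn}.
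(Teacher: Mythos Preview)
Your argument is correct: the splitting identity $D_{2^{n-1}+k}=D_{2^{n-1}}+r_{n-1}D_{k}$ yields exactly the recursion $2^{n}K_{2^{n}}=(1+r_{n-1})2^{n-1}K_{2^{n-1}}+2^{n-1}D_{2^{n-1}}$, and your four-branch case analysis (on $x_{n-1}\in\{0,1\}$ and $x\in I_{n-1}$ versus $x\notin I_{n-1}$) closes the induction cleanly in each case, including the arithmetic $(2^{n-1}+1)/2+2^{n-2}\cdot 2=(2^{n}+1)/2$ on $I_{n}$.

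Note, however, that the paper does not supply its own proof of this lemma: it is quoted as a known fact from the standard references \cite{G-E-S} and \cite{sws}, so there is no in-paper argument to compare against. Your recursion-plus-induction approach is in fact the classical derivation found in those sources, so you have reconstructed the standard proof.
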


\begin{lemma}[\textbf{Tephnadze \protect\cite{tep7}}]
\label{lemma3}Let $n=\sum_{i=1}^{s}\sum_{k=l_{i}}^{m_{i}}2^{k},$ where $%
m_{1}\geq l_{1}>l_{1}-2\geq m_{2}\geq l_{2}>l_{2}-2>...>m_{s}\geq l_{s}\geq
0.$ Then
\begin{equation*}
n\left\vert K_{n}\left( x\right) \right\vert \geq 2^{2l_{i}-4},\text{ \ \
for \ \ }x\in E_{l_{i}}:=I_{l_{i}+1}\left( e_{l_{i}-1}+e_{l_{i}}\right) ,
\end{equation*}%
where $I_{1}\left( e_{-1}+e_{0}\right) =I_{2}\left( e_{0}+e_{1}\right) .$
\end{lemma}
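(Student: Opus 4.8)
The plan is to evaluate $nK_n(x)$ pointwise on $E_{l_i}$ by substituting the explicit values of the building blocks $K_{2^m}$ and $D_{2^m}$ into the decomposition (\ref{9a}), and then to show that a single summand dominates all the others. First I would fix $i$ and describe the point: for $l_i\geq 1$ a point $x\in E_{l_i}=I_{l_i+1}(e_{l_i-1}+e_{l_i})$ satisfies $x_{l_i-1}=x_{l_i}=1$ and $x_0=\cdots=x_{l_i-2}=0$, with the coordinates of index exceeding $l_i$ unrestricted. In particular the first nonzero coordinate sits in position $l_i-1$, so $x\in I_{l_i-1}\setminus I_{l_i}$; this identifies the index $t=l_i-1$ in Lemma \ref{lemma2}.

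Next I would read off, for each dyadic exponent $m$ occurring in $n$, the value of the two kernels at $x$. From (\ref{1dn}) we get $D_{2^m}(x)=2^m$ for $m\leq l_i-1$ and $D_{2^m}(x)=0$ for $m\geq l_i$. From Lemma \ref{lemma2} we get $K_{2^m}(x)=(2^m+1)/2$ for $m\leq l_i-1$ (where $x\in I_m$); for $m=l_i$ the membership $x\in I_{l_i}(e_{l_i-1})$ holds with $l_i>t=l_i-1$, so $K_{2^{l_i}}(x)=2^{t-1}=2^{l_i-2}$; and for every $m\geq l_i+1$ the relation $x\in I_m(e_{l_i-1})$ fails because $x_{l_i}=1\neq 0$, so $K_{2^m}(x)=0$. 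This last vanishing is the crucial observation: it kills every summand of (\ref{9a}) with $n_A\geq l_i+1$, since both $K_{2^{n_A}}(x)$ and $D_{2^{n_A}}(x)$ are then zero.

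It then remains to collect the surviving terms. Writing the Walsh factors as $w_{2^{n_j}}(x)=r_{n_j}(x)=\pm 1$, the summand with $n_A=l_i$ equals $\pm\,2^{l_i}\cdot 2^{l_i-2}=\pm\,2^{2l_i-2}$ (its $D_{2^{l_i}}(x)$ part drops out), while each summand with $n_A\leq l_i-1$ equals $\pm\,2^{n_A}\bigl[(2^{n_A}+1)/2-n^{(A)}\bigr]$. Here the separation hypothesis $l_i-2\geq m_{i+1}$ is decisive: it forces position $l_i-1$ to be a zero bit of $n$, so these remaining exponents satisfy $n_A\leq l_i-2$. Using $0\leq n^{(A)}<2^{n_A}$ I would bound each such term by $2^{n_A}\cdot 2^{n_A}=4^{n_A}$, whence their sum is at most $\sum_{m=0}^{l_i-2}4^{m}=(4^{l_i-1}-1)/3<2^{2l_i-2}/3$. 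The triangle inequality then yields
\[
n\left\vert K_n(x)\right\vert \geq 2^{2l_i-2}-\frac{2^{2l_i-2}}{3}=\frac{2}{3}\,2^{2l_i-2}>2^{2l_i-4},
\]
as claimed, and in fact with room to spare.

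The main obstacle is not any single estimate but the correct bookkeeping of Lemma \ref{lemma2} — in particular recognizing that $K_{2^m}(x)$ vanishes for $m>l_i$, and that the gap condition is exactly what keeps the tail geometrically subdominant — together with the degenerate lowest block $l_i=0$. In that case $e_{l_i-1}=e_{-1}$ is meaningless, one falls back on the stated convention $I_1(e_{-1}+e_0)=I_2(e_0+e_1)$, and the first nonzero coordinate now sits in position $0$, so the roles of $K_{2^0}=K_1$ and $K_{2}$ must be re-examined directly rather than through the generic formula. I would treat this boundary block by an analogous direct computation of $nK_n(x)$, which is where the argument requires the most care.
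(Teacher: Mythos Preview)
The present paper does not prove this lemma; it is quoted from \cite{tep7}, so there is no in--paper argument to compare against. Your approach for $l_i\geq 1$ is correct and is the natural one: plug the exact values of $D_{2^m}$ and $K_{2^m}$ from (\ref{1dn}) and Lemma \ref{lemma2} into the decomposition (\ref{9a}), use the two consecutive ones $x_{l_i-1}=x_{l_i}=1$ to annihilate every summand with $n_A>l_i$, isolate the single surviving term $n_A=l_i$ of exact modulus $2^{2l_i-2}$, and bound the tail (forced by the gap $m_{i+1}\leq l_i-2$ to live at levels $\leq l_i-2$) by the geometric sum $\sum_{m\leq l_i-2}4^{m}<2^{2l_i-2}/3$. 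This yields $(2/3)\,2^{2l_i-2}>2^{2l_i-4}$, as you wrote. That is almost certainly the argument in \cite{tep7} as well.

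A warning about the boundary case you flag. Your plan to settle $l_i=0$ ``by an analogous direct computation'' will not succeed under the stated convention $E_0=I_2(e_0+e_1)$: take $n=3$ (so $s=1$, $l_1=0$, $m_1=1$) and any $x$ with $x_0=x_1=1$; then $D_1(x)=1$, $D_2(x)=0$, $D_3(x)=-1$, hence $3K_3(x)=0$, which violates $n|K_n(x)|\geq 2^{-4}$. So the degenerate block is not merely delicate --- the inequality can fail there as written. This causes no trouble for the paper, since in the proofs of Theorems \ref{theorem1} and \ref{theorem3} the lemma is invoked only for the inner indices $2\leq i\leq s_k-2$, and in Theorem \ref{theorem2} one may always assume $[\alpha_k]\geq 1$ after passing to a subsequence; but you should not expect your method to extend to $l_i=0$.
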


\begin{lemma}[\textbf{Goginava \protect\cite{GoSzeged}}]
\label{lemma4}Let $x\in I_{M}^{k,l},$ $k=0,...,M-1,$ $l=k+1,...,M.$ Then
\end{lemma}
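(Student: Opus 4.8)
The statement to be established bounds the averaged Fej\'er kernel: for $x\in I_M^{k,l}$ and $n\geq 2^{M}$ one should get an estimate of the form $\int_{I_M}\left|K_n(x+t)\right|d\mu(t)\leq c\,2^{k+l}/2^{2M}$. My plan is to extract everything from the explicit block decomposition of $K_n$. Writing $n=\sum_{i=1}^{r}2^{n_i}$ with $n_1>\cdots>n_r\geq 0$ and invoking (\ref{9a}), the triangle inequality together with $\left|w_{2^{n_j}}\right|=1$ gives $n\left|K_n(u)\right|\leq\sum_{A=1}^{r}\left|2^{n_A}K_{2^{n_A}}(u)-n^{(A)}D_{2^{n_A}}(u)\right|$. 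Since Haar measure is translation invariant, $\int_{I_M}\left|K_n(x+t)\right|d\mu(t)=\int_{x+I_M}\left|K_n\right|d\mu$, and for $x\in I_M^{k,l}$ the coset $x+I_M$ is a translate of $I_M$ whose points all have their first nonzero coordinate at position $k$ (and, when $l<M$, a second prescribed nonzero coordinate at $l$). The two dyadic kernels on this coset are then read off from the auxiliary results: formula (\ref{1dn}) shows $D_{2^{n_A}}(u)\neq 0$ only on $I_{n_A}$, forcing $n_A\leq k$, while Lemma \ref{lemma2} shows $K_{2^{n_A}}(u)\neq 0$ only when $u$ carries at most one nonzero coordinate among its first $n_A$ entries, its value being $(2^{n_A}+1)/2$ if $n_A\leq k$ and $2^{k-1}$ if $n_A>k$ with the single nonzero at $k$.

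For the range $l<M$ the argument is clean. Every point of $x+I_M$ has nonzero coordinates at both $k$ and $l$, so $K_{2^{n_A}}(u)=0$ as soon as $n_A>l$, and only the blocks with $n_A\leq l$ survive. The blocks with $k<n_A\leq l$ each contribute $2^{n_A}\cdot 2^{k-1}$, whose geometric sum is $O(2^{k+l})$, while the blocks with $n_A\leq k$ contribute $O(2^{2n_A})$ after the cancellation inside $2^{n_A}K_{2^{n_A}}-n^{(A)}D_{2^{n_A}}$ and sum to $O(2^{2k})$. Hence $n\left|K_n\right|=O(2^{k+l})$ uniformly on the coset, and dividing by $n\geq 2^{M}$ and integrating over its measure $2^{-M}$ gives the desired $c\,2^{k+l}/2^{2M}$; here the mere triangle inequality suffices.

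The case $l=M$, i.e. $x\in I_M(e_k)$, is where the real work lies. Now there is only one prescribed nonzero coordinate, so the blocks with $k<n_A\leq |n|$ no longer vanish; instead the set on which $2^{n_A}K_{2^{n_A}}(u)=2^{n_A+k-1}$ is exactly $I_{n_A}(e_k)$, whose trace on the coset shrinks as $n_A$ grows. The point I expect to be the main obstacle is to resist bounding these logarithmically many surviving blocks termwise by their common size $2^{k-1}$, which would cost a spurious factor of order $|n|-k$. Instead one should organize the integration according to the position $\tau$ of the first free nonzero coordinate of $u$: the blocks contributing for a given $\tau$ sum geometrically to $O(2^{k+\min(\tau,|n|)})$, and this is weighted by the measure $2^{-(\tau+1)}$ of the corresponding layer. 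Summing the resulting series over $\tau\geq M$ yields a total of order $2^{k}\,2^{|n|-M}$, and since $n\geq 2^{|n|}\geq 2^{M}$ the division by $n$ gives $O(2^{k-M})=O(2^{k+l}/2^{2M})$ — the geometric decay in $\tau$ absorbs the $O(|n|-M)$ count of surviving blocks, which is precisely the estimate to be proved.
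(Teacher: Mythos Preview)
The paper does not prove Lemma~\ref{lemma4} at all: it is quoted as a known result of Goginava \cite{GoSzeged} and used as a black box in the proof of Theorem~\ref{theorem2}. So there is no ``paper's own proof'' to compare your proposal against.

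That said, your argument is a correct direct proof. The decomposition (\ref{9a}) together with Lemma~\ref{lemma2} and (\ref{1dn}) indeed reduces the estimate on the coset $x+I_{M}$ to the three block ranges $n_{A}\leq k$, $k<n_{A}\leq l$, $n_{A}>l$, and the case split $l<M$ versus $l=M$ is exactly the right distinction. One small expository correction in your last paragraph: in the case $l=M$ the sum
\[
\sum_{\tau\geq M}2^{\,k+\min(\tau,|n|)}\,2^{-(\tau+1)}
\;=\;(|n|-M+2)\,2^{k-1}
\]
is \emph{linear} in $|n|-M$, not constant; the geometric decay in $\tau$ does not by itself absorb the block count. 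What actually kills this factor is the subsequent division by $n\geq 2^{|n|}$, since $(|n|-M+2)/2^{|n|-M}$ is bounded for $|n|\geq M$. Your stated intermediate bound $O(2^{k}\,2^{|n|-M})$ is of course a valid (weaker) majorant of the linear expression, so the final conclusion $O(2^{k-M})=O(2^{k+l-2M})$ stands; only the verbal explanation of \emph{why} it stands should be adjusted.
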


\begin{equation*}
\int_{I_{M}}\left\vert K_{n}\left( x+t\right) \right\vert d\mu \left(
t\right) \leq c2^{k+l-2M},\text{ for }n\geq 2^{M}.
\end{equation*}

\begin{lemma}
\label{lemma5}Let $n=\sum_{i=1}^{r}\sum_{k=l_{i}}^{m_{i}}2^{k},$ where $%
m_{1}\geq l_{1}>l_{1}-2\geq m_{2}\geq l_{2}>l_{2}-2>...>m_{s}\geq l_{s}\geq
0.$ Then
\begin{equation*}
\left\vert nK_{n}\right\vert \leq c\sum_{A=1}^{r}\left( 2^{l_{A}}\left\vert
K_{2^{l_{A}}}\right\vert +2^{m_{A}}\left\vert K_{2^{m_{A}}}\right\vert
+2^{l_{A}}\sum_{k=l_{A}}^{m_{A}}D_{2^{k}}\right) +cV\left( n\right) .
\end{equation*}
\end{lemma}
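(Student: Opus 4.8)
The plan is to start from the exact identity \eqref{9a} for the Fej\'er kernel, namely
\begin{equation*}
nK_{n}=\sum_{A=1}^{r}\left( \prod_{j=1}^{A-1}w_{2^{n_{j}}}\right) \left( 2^{n_{A}}K_{2^{n_{A}}}-n^{\left( A\right) }D_{2^{n_{A}}}\right) ,
\end{equation*}
but rewrite it in terms of the blocks of consecutive binary digits of $n$ rather than the individual powers $2^{n_{i}}$. Writing $n=\sum_{i=1}^{s}\sum_{k=l_{i}}^{m_{i}}2^{k}$ groups the $1$-digits into $s$ maximal runs separated by at least one $0$; the point of the gap condition $l_{i}-2\geq m_{i+1}$ is that within each run the indices are contiguous, so consecutive terms of the sum in \eqref{9a} telescope. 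First I would isolate, for each block $[l_{A},m_{A}]$, the two endpoint contributions $2^{l_{A}}K_{2^{l_{A}}}$ and $2^{m_{A}}K_{2^{m_{A}}}$, which survive as genuine kernel terms, and show that the interior of each block collapses into a sum of Dirichlet kernels with a uniform prefactor $2^{l_{A}}$.

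The second step is to control the products $\prod_{j}w_{2^{n_{j}}}=r_{n_{j}}$ of Rademacher functions that weight each term in \eqref{9a}. Since $\left\vert r_{k}\right\vert =1$ pointwise, these factors disappear under the absolute value, so taking $\left\vert\cdot\right\vert$ throughout and applying the triangle inequality gives an upper bound that is a sum over the blocks of the three nonnegative pieces appearing in the statement. The delicate bookkeeping is the coefficient $n^{(A)}$ multiplying the Dirichlet terms: I would estimate $n^{(A)}\leq 2^{l_{A}}$ block-by-block (the tail $n^{(A)}$ is dominated by $2$ times the lowest power present, which within block $A$ is $2^{l_{A}}$), converting $n^{(A)}D_{2^{n_{A}}}$ into $2^{l_{A}}D_{2^{k}}$ summed over $k\in[l_{A},m_{A}]$. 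This is where the telescoping of the Dirichlet kernels inside a block is used to recombine the $D_{2^{n_{j}}}$'s arising from distinct terms of \eqref{9a} into the single clean sum $\sum_{k=l_{A}}^{m_{A}}D_{2^{k}}$.

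The final piece is the additive error term $cV\left( n\right) $. This is precisely the residue from boundary mismatches between adjacent blocks and from the telescoping remainders: each time a run of digits begins or ends, the variation $V(n)$ defined in \eqref{var} counts one unit, so there are on the order of $V(n)$ such junctions. I would bound the leftover terms — those not absorbed into the three structured pieces — by a constant per junction using the crude estimate $\left\vert D_{2^{k}}\right\vert\leq 2^{k}$ together with $\left\vert K_{2^{k}}\right\vert\leq c2^{k}$ from Lemma \ref{lemma2} only where needed, and collect them into $cV(n)$. The main obstacle I anticipate is the careful matching of indices when reindexing \eqref{9a} from the digit-expansion $n_{1}>\dots>n_{r}$ to the block form: one must verify that every term of \eqref{9a} lands in exactly one block and that the Rademacher products telescope consistently with the Dirichlet kernels, so that nothing is double-counted and the separation hypothesis $l_{i}>l_{i}-2\geq m_{i+1}$ is genuinely what makes the interior sums collapse. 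Once the reindexing is pinned down, the remaining estimates are routine applications of $\left\vert r_{k}\right\vert=1$, the triangle inequality, and $n^{(A)}\leq 2^{l_{A}}$.
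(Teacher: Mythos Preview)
Your overall strategy---start from \eqref{9a}, regroup by blocks, kill the Rademacher factors by taking absolute values---matches the paper's, but the step you flag as ``delicate bookkeeping'' is where your plan breaks. You write that you would ``estimate $n^{(A)}\leq 2^{l_{A}}$ block-by-block (the tail $n^{(A)}$ is dominated by $2$ times the lowest power present, which within block $A$ is $2^{l_{A}}$).'' This is false for any $n_{A}$ above the bottom of its block: if $l_{j}<n_{A}\leq m_{j}$ then $n^{(A)}\geq \sum_{v=l_{j}}^{n_{A}-1}2^{v}=2^{n_{A}}-2^{l_{j}}$, which is of order $2^{n_{A}}$, not $2^{l_{j}}$. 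Consequently the Dirichlet contribution from the top of a long block is of size $\approx 2^{2m_{j}}$, not $2^{l_{j}+m_{j}}$, and your proposed bound $2^{l_{A}}\sum_{k=l_{A}}^{m_{A}}D_{2^{k}}$ cannot absorb it. Relatedly, your claim that ``the interior of each block collapses into a sum of Dirichlet kernels'' is asserted but not supported: the raw terms $2^{n_{A}}K_{2^{n_{A}}}$ for $l_{j}<n_{A}<m_{j}$ do not telescope by themselves.

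The paper fixes both problems with a single add--subtract. One rewrites each summand as
\[
2^{n_{A}}K_{2^{n_{A}}}-n^{(A)}D_{2^{n_{A}}}=\bigl(2^{n_{A}}K_{2^{n_{A}}}-(2^{n_{A}}-1)D_{2^{n_{A}}}\bigr)+\bigl(2^{n_{A}}-1-n^{(A)}\bigr)D_{2^{n_{A}}},
\]
and now the Dirichlet coefficient is $2^{n_{A}}-1-n^{(A)}\leq 2^{l_{j}}$ (precisely because $n^{(A)}\geq 2^{n_{A}}-2^{l_{j}}$), which gives the third term in the lemma. For the first bracket, summing over a block and using the identity $(2^{N}-1)K_{2^{N}-1}=\sum_{k=0}^{N-1}\bigl(\prod_{j=k+1}^{N-1}w_{2^{j}}\bigr)\bigl(2^{k}K_{2^{k}}-(2^{k}-1)D_{2^{k}}\bigr)$ collapses the block sum into $(2^{m_{v}+1}-1)K_{2^{m_{v}+1}-1}-(\cdots)(2^{l_{v}}-1)K_{2^{l_{v}}-1}$, which is then controlled via $|K_{2^{N}-1}|\leq c|K_{2^{N}}|+c$; this is the origin of the two kernel endpoint terms and of the additive $cV(n)$. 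So the missing idea in your sketch is exactly this rebalancing: you must shift $(2^{n_{A}}-1)D_{2^{n_{A}}}$ from the Dirichlet side to the kernel side before either piece becomes tractable.
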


\begin{proof}
Let $n=\sum_{i=1}^{r}2^{n_{i}},$ $n_{1}>n_{2}>...>n_{r}\geq 0.$ By using (%
\ref{9a}) we get that%
\begin{equation*}
nK_{n}=\sum_{A=1}^{r}\left( \underset{j=1}{\overset{A-1}{\prod }}%
w_{2^{n_{j}}}\right) \left( \left( 2^{n_{A}}K_{2^{n_{A}}}+\left(
2^{n_{A}}-1\right) D_{2^{n_{A}}}\right) \right)
\end{equation*}%
\begin{equation*}
-\sum_{A=1}^{r}\left( \underset{j=1}{\overset{A-1}{\prod }}%
w_{2^{n_{j}}}\right) \left( 2^{n_{A}}-1-n^{\left( A\right) }\right)
D_{2^{n_{A}}}=I_{1}-I_{2}.
\end{equation*}

For $I_{1}$ we have that%
\begin{equation*}
I_{1}=\sum_{v=1}^{r}\left( \underset{j=1}{\overset{v-1}{\prod }}\underset{%
i=l_{j}}{\overset{m_{j}}{\prod }}w_{2^{i}}\right) \left(
\sum_{k=l_{v}}^{m_{v}}\left( \underset{j=k+1}{\overset{m_{v}}{\prod }}%
w_{2^{j}}\right) \left( 2^{k}K_{2^{k}}-\left( 2^{k}-1\right)
D_{2^{k}}\right) \right)
\end{equation*}%
\begin{equation*}
=\sum_{v=1}^{r}\left( \underset{j=1}{\overset{v-1}{\prod }}\underset{i=l_{j}}%
{\overset{m_{j}}{\prod }}w_{2^{i}}\right) \left(
\sum_{k=0}^{m_{v}}-\sum_{k=0}^{l_{v}-1}\right) \left( \underset{j=k+1}{%
\overset{m_{v}}{\prod }}w_{2^{j}}\right) \left( 2^{k}K_{2^{k}}-\left(
2^{k}-1\right) D_{2^{k}}\right)
\end{equation*}%
\begin{equation*}
=\sum_{v=1}^{r}\left( \underset{j=1}{\overset{v-1}{\prod }}\underset{i=l_{j}}%
{\overset{m_{j}}{\prod }}w_{2^{i}}\right) \left( \sum_{k=0}^{m_{v}}\left(
\underset{j=k+1}{\overset{m_{v}}{\prod }}w_{2^{j}}\right) \left(
2^{k}K_{2^{k}}-\left( 2^{k}-1\right) D_{2^{k}}\right) \right)
\end{equation*}%
\begin{equation*}
-\sum_{v=1}^{r}\left( \underset{j=1}{\overset{v}{\prod }}\underset{i=l_{j}}{%
\overset{m_{j}}{\prod }}w_{2^{i}}\right) \left( \sum_{k=0}^{l_{v}-1}\left(
\underset{j=k+1}{\overset{l_{v}-1}{\prod }}w_{2^{j}}\right) \left(
2^{k}K_{2^{k}}-\left( 2^{k}-1\right) D_{2^{k}}\right) \right) .
\end{equation*}

Since $2^{n}-1=\sum_{k=0}^{n-1}2^{k}$ and%
\begin{equation*}
\left( 2^{n}-1\right) K_{2^{n}-1}=\sum_{k=0}^{n-1}\left(
\prod_{j=k+1}^{n-1}w_{2^{j}}\right) \left( 2^{k}K_{2^{k}}-\left(
2^{k}-1\right) D_{2^{k}}\right) ,
\end{equation*}%
\ we obtain that
\begin{equation*}
I_{1}=\sum_{v=1}^{r}\left( \underset{j=1}{\overset{v-1}{\prod }}\underset{%
i=l_{j}}{\overset{m_{j}}{\prod }}w_{2^{i}}\right) \left(
2^{m_{v}+1}-1\right) K_{2^{m_{v}+1}-1}
\end{equation*}%
\begin{equation*}
-\sum_{v=1}^{r}\left( \underset{j=1}{\overset{v}{\prod }}\underset{i=l_{j}}{%
\overset{m_{j}}{\prod }}w_{2^{i}}\right) \left( 2^{l_{v}}-1\right)
K_{2^{l_{v}}-1}.
\end{equation*}%
By using $\left\vert K_{2^{n}}\right\vert \leq c\left\vert
K_{2^{n-1}}\right\vert ,\left\vert K_{2^{n}-1}\right\vert \leq c\left\vert
K_{2^{n}}\right\vert +c,\ $we have that%
\begin{equation}
\left\vert I_{1}\right\vert \leq c\sum_{v=1}^{r}\left( 2^{l_{v}}\left\vert
K_{2^{l_{v}}}\right\vert +2^{m_{v}}\left\vert K_{2^{m_{v}}}\right\vert
+cr\right) .  \label{12c}
\end{equation}

Let $l_{j}<n_{A}\leq m_{j},$ for some $j=1,...,s.$ Then $n^{\left( A\right)
}\geq \sum_{v=l_{j}}^{n_{A}-1}2^{v}\geq 2^{n_{A}}-2^{l_{j}}$ \ and $%
2^{n_{A}}-1-n^{\left( A\right) }\leq 2^{l_{j}}.$ If $l_{j}=n_{A}$ for some $%
j=1,...,s.$ Then $n^{\left( A\right) }\leq 2^{m_{j-1}+1}<2^{l_{j}}.$ Hence
\begin{equation}
\left\vert I_{2}\right\vert \leq
c\sum_{v=1}^{r}2^{l_{v}}\sum_{k=l_{v}}^{m_{v}}D_{2^{k}}.  \label{12d}
\end{equation}

By combining (\ref{12c})-(\ref{12d}) we complete the proof of Lemma \ref%
{lemma5}.
\end{proof}

\section{Proof of the Theorems}

\begin{proof}[\textbf{Proof of Theorem \protect\ref{theorem1}.}]
Suppose that
\begin{equation}
\left\Vert \frac{\sigma _{n}F}{V^{2}\left( n\right) }\right\Vert _{1/2}\leq
c\left\Vert F\right\Vert _{H_{1/2}}.  \label{12k}
\end{equation}%
By combining (\ref{5.1}) and (\ref{12k}) we have that
\begin{equation}
\left\Vert \frac{\sigma _{n}F}{V^{2}\left( n\right) }\right\Vert
_{H_{1/2}}\sim \int_{G}\left\Vert \frac{\widetilde{\left( \sigma
_{n}F\right) ^{\left( t\right) }}}{V^{2}\left( n\right) }\right\Vert
_{1/2}d\mu \left( t\right)  \label{12l}
\end{equation}%
\begin{equation*}
=\int_{G}\left\Vert \frac{\sigma _{n}\widetilde{F^{\left( t\right) }}}{%
V^{2}\left( n\right) }\right\Vert _{1/2}d\mu \left( t\right) \leq
c\int_{G}\left\Vert \widetilde{F^{\left( t\right) }}\right\Vert
_{H_{1/2}}d\mu \left( t\right)
\end{equation*}%
\begin{equation*}
\leq c\int_{G}\left\Vert F\right\Vert _{H_{1/2}}d\mu \left( t\right)
=c\left\Vert F\right\Vert _{H_{1/2}}.
\end{equation*}

By Lemma \ref{lemma1} and (\ref{12l}), the proof of theorem \ref{theorem1}
will be complete, if we show that%
\begin{equation*}
\int_{\overline{I_{M}}}\left( \frac{\left\vert \sigma _{n}a\left( x\right)
\right\vert }{V^{2}\left( n\right) }\right) ^{1/2}d\mu \left( x\right) \leq
c<\infty ,
\end{equation*}%
for every 1/2-atom $a.$ We may assume that $a$ be an arbitrary $1/2$-atom,
with support$\ I,$ $\mu \left( I\right) =2^{-M}$ and $I=I_{M}.$ It is easy
to see that $\sigma _{n}\left( a\right) =0,$ when $n\leq 2^{M}.$ Therefore,
we can suppose that $n>2^{M}.$

Set%
\begin{eqnarray*}
II_{\alpha _{A}}^{1}\left( x\right) &:&=2^{M}\int_{I_{M}}2^{\alpha
_{A}}\left\vert K_{2^{\alpha _{A}}}\left( x+t\right) \right\vert d\mu \left(
t\right) ,\text{\ } \\
II_{l_{A}}^{2}\left( x\right)
&:&=2^{M}\int_{I_{M}}2^{l_{A}}\sum_{k=l_{A}}^{m_{A}}D_{2^{k}}\left(
x+t\right) d\mu \left( t\right) ,
\end{eqnarray*}

Let $x\in I_{M}.$ Since $\sigma _{n}$ is bounded from $L_{\infty }$ to $%
L_{\infty },$ $n>2^{M}$ and $\left\Vert a\right\Vert _{\infty }\leq 2^{2M},$
by using Lemma \ref{lemma5} we obtain that
\begin{equation*}
\frac{\left\vert \sigma _{n}a\left( x\right) \right\vert }{V^{2}\left(
n\right) }\leq \frac{c}{V^{2}\left( n\right) }\int_{I_{M}}\left\vert a\left(
x\right) \right\vert \left\vert K_{n}\left( x+t\right) \right\vert d\mu
\left( t\right)
\end{equation*}%
\begin{equation*}
\leq \frac{\left\Vert a\right\Vert _{\infty }}{V^{2}\left( n\right) }%
\int_{I_{M}}\left\vert K_{n}\left( x+t\right) \right\vert d\mu \left(
t\right) \leq \frac{c2^{2M}}{V^{2}\left( n\right) }\int_{I_{M}}\left\vert
K_{n}\left( x+t\right) \right\vert d\mu \left( t\right)
\end{equation*}%
\begin{equation*}
\leq \frac{c2^{M}}{V^{2}\left( n\right) }\sum_{A=1}^{s}\int_{I_{M}}2^{l_{A}}%
\left\vert K_{2^{l_{A}}}\left( x+t\right) \right\vert d\mu \left( t\right)
\end{equation*}%
\begin{equation*}
+\frac{c2^{M}}{V^{2}\left( n\right) }\int_{I_{M}}2^{m_{A}}\left\vert
K_{2^{m_{A}}}\left( x+t\right) \right\vert d\mu \left( t\right)
\end{equation*}%
\begin{equation*}
+\frac{c2^{M}}{V^{2}\left( n\right) }\sum_{A=1}^{s}\int_{I_{M}}2^{l_{A}}%
\sum_{k=l_{A}}^{m_{A}}D_{2^{k}}\left( x+t\right) d\mu \left( t\right) +\frac{%
c2^{M}}{V^{2}\left( n\right) }\int_{I_{M}}V\left( n\right) d\mu \left(
t\right) .
\end{equation*}%
\begin{equation*}
\frac{c}{V^{2}\left( n\right) }\sum_{A=1}^{s}\left( II_{^{l_{A}}}^{1}\left(
x\right) +II_{^{m_{A}}}^{1}\left( x\right) +II_{l_{A}}^{2}\left( x\right)
\right) +c.
\end{equation*}

Hence
\begin{equation*}
\int_{\overline{I_{M}}}\left\vert \frac{\sigma _{n}a\left( x\right) }{%
V^{2}\left( n\right) }\right\vert ^{1/2}d\mu \left( x\right)
\end{equation*}%
\begin{eqnarray*}
&\leq &\frac{c}{V\left( n\right) }\left( \sum_{A=1}^{s}\int_{\overline{I_{M}}%
}\left\vert II_{l_{A}}^{1}\left( x\right) \right\vert ^{1/2}d\mu \left(
x\right) \right. \\
&&\left. +\int_{\overline{I_{M}}}\left\vert II_{m_{A}}^{1}\left( x\right)
\right\vert ^{1/2}d\mu \left( x\right) +\int_{\overline{I_{M}}}\left\vert
II_{l_{A}}^{2}\left( x\right) \right\vert ^{1/2}d\mu \left( x\right) \right)
+c
\end{eqnarray*}

Since $s\leq 4V\left( n\right) ,$ we obtain that Theorem \ref{theorem1} will
be proved if we show that
\begin{equation}
\int_{\overline{I_{M}}}\left\vert II_{\alpha _{A}}^{1}\left( x\right)
\right\vert ^{1/2}d\mu \left( x\right) \leq c<\infty ,\text{\ }\int_{%
\overline{I_{M}}}\left\vert II_{l_{A}}^{2}\left( x\right) \right\vert
^{1/2}d\mu \left( x\right) \leq c<\infty ,\text{\ }  \label{11.1}
\end{equation}%
where $\alpha _{A}=l_{A}$ or $\alpha _{A}=m_{A},$ $A=1,...,s$.

Let $t\in I_{M}$ and $x\in I_{l+1}\left( e_{k}+e_{l}\right) ,$ $0\leq
k<l<\alpha _{A}\leq M$ or $0\leq k<l\leq M\leq \alpha _{A}.$ Since $x+t\in
I_{l+1}\left( e_{k}+e_{l}\right) ,$ by using Lemma \ref{lemma2} we obtain
that
\begin{equation}
K_{2^{\alpha _{A}}}\left( x+t\right) =0\text{ \ and \ \ }II_{\alpha
_{A}}^{1}\left( x\right) =0.  \label{10a}
\end{equation}

Let $x\in I_{l+1}\left( e_{k}+e_{l}\right) ,$ $0\leq k<\alpha _{A}\leq l\leq
M.$ Then $x+t\in I_{l+1}\left( e_{k}+e_{l}\right) ,$ for $t\in I_{M}$ and if
we apply lemma \ref{lemma2} we get that
\begin{equation}
2^{\alpha _{A}}\left\vert K_{2^{\alpha _{A}}}\left( x+t\right) \right\vert
\leq 2^{\alpha _{A}+k}\text{ \ and\ \ \ \ }II_{\alpha _{A}}^{1}\left(
x\right) \leq 2^{\alpha _{A}+k}.  \label{10b}
\end{equation}

Analogously to (\ref{10b}) we can show that if $0\leq \alpha _{A}\leq
k<l\leq M$, then
\begin{equation}
2^{\alpha _{A}}\left\vert K_{2^{\alpha _{A}}}\left( x+t\right) \right\vert
\leq 2^{2\alpha _{A}},\text{ \ }II_{\alpha _{A}}^{1}\left( x\right) \leq
2^{2\alpha _{A}},\text{\ }t\in I_{M},\text{\ }x\in I_{l+1}\left(
e_{k}+e_{l}\right) ,  \label{10c}
\end{equation}

Let $0\leq \alpha _{A}\leq M-1.$ By combining (\ref{1}) and (\ref{10a}-\ref%
{10c}) we have that
\begin{equation*}
\int_{\overline{I_{M}}}\left\vert II_{\alpha _{A}}^{1}\left( x\right)
\right\vert ^{1/2}d\mu \left( x\right)
\end{equation*}%
\begin{equation*}
=\overset{M-2}{\underset{k=0}{\sum }}\overset{M-1}{\underset{l=k+1}{\sum }}%
\int_{I_{l+1}\left( e_{k}+e_{l}\right) }\left\vert II_{\alpha
_{A}}^{1}\left( x\right) \right\vert ^{1/2}d\mu \left( x\right) +\overset{M-1%
}{\underset{k=0}{\sum }}\int_{I_{M}\left( e_{k}\right) }\left\vert
II_{\alpha _{A}}^{1}\left( x\right) \right\vert ^{1/2}d\mu \left( x\right)
\end{equation*}%
\begin{equation*}
\leq c\overset{\alpha _{A}-1}{\underset{k=0}{\sum }}\overset{M-1}{\underset{%
l=\alpha _{A}+1}{\sum }}\int_{I_{l+1}\left( e_{k}+e_{l}\right) }2^{\left(
\alpha _{A}+k\right) /2}d\mu \left( x\right) +c\overset{M-2}{\underset{%
k=\alpha _{A}}{\sum }}\overset{M-1}{\underset{l=k+1}{\sum }}%
\int_{I_{l+1}\left( e_{k}+e_{l}\right) }2^{\alpha _{A}}d\mu \left( x\right)
\end{equation*}%
\begin{equation*}
+c\overset{\alpha _{A}-1}{\underset{k=0}{\sum }}\int_{I_{M}\left(
e_{k}\right) }2^{\left( \alpha _{A}+k\right) /2}d\mu \left( x\right) +c%
\overset{M-1}{\underset{k=\alpha _{A}}{\sum }}\int_{I_{M}\left( e_{k}\right)
}2^{\alpha _{A}}d\mu \left( x\right)
\end{equation*}%
\begin{equation*}
\leq c\overset{\alpha _{A}-1}{\underset{k=0}{\sum }}\overset{M-1}{\underset{%
l=\alpha _{A}+1}{\sum }}\frac{2^{\left( \alpha _{A}+k\right) /2}}{2^{l}}+c%
\overset{M-2}{\underset{k=\alpha _{A}}{\sum }}\overset{M-1}{\underset{l=k+1}{%
\sum }}\frac{2^{\alpha _{A}}}{2^{l}}+c\overset{\alpha _{A}-1}{\underset{k=0}{%
\sum }}\frac{2^{\left( \alpha _{A}+k\right) /2}}{2^{M}}+c\overset{M-1}{%
\underset{k=\alpha _{A}}{\sum }}\frac{2^{\alpha _{A}}}{2^{M}}
\end{equation*}%
\begin{equation*}
\leq c<\infty ,\text{ \ \ for any }A=1,...,s.
\end{equation*}

Let $\alpha _{A}\geq M.$ Analogously we can show that (\ref{11.1}) holds,
for $II_{\alpha _{A}}^{1}\left( x\right) ,$ $A=1,...,s.$

Now, prove boundedness of $II_{l_{A}}^{2}$. Let $t\in I_{M}$ and $x\in
I_{i}\backslash I_{i+1},$ $i\leq l_{A}-1.$ Since $x+t\in I_{i}\backslash
I_{i+1},$ by using (\ref{1dn}) we have that
\begin{equation}
II_{l_{A}}^{2}\left( x\right) =0.  \label{13a}
\end{equation}

Let $x\in I_{i}\backslash I_{i+1},$ $l_{A}\leq i\leq m_{A}.$ Since $n\geq
2^{M}$ and $t\in I_{M},$ by using (\ref{1dn}) we obtain that
\begin{equation}
II_{l_{A}}^{2}\left( x\right) \leq
2^{M}\int_{I_{M}}2^{l_{A}}\sum_{k=l_{A}}^{i}D_{2^{k}}\left( x+t\right) d\mu
\left( t\right) \leq c2^{l_{A}+i}.  \label{13b}
\end{equation}

Let $x\in I_{i}\backslash I_{i+1},$ $m_{A}<i\leq M-1.$ By using (\ref{1dn})
we get that $x+t\in I_{i}\backslash I_{i+1},$ for $t\in I_{M}$ and
\begin{equation}
II_{l_{A}}^{2}\left( x\right) \leq c2^{M}\int_{I_{M}}2^{l_{A}+m_{A}}\leq
c2^{l_{A}+m_{A}}.  \label{13c}
\end{equation}%
Let $0\leq l_{A}\leq m_{A}\leq M.$ By combining (\ref{1}) and (\ref{13a}-\ref%
{13c}) we get that
\begin{equation*}
\int_{\overline{I_{M}}}\left\vert II_{l_{A}}^{2}\left( x\right) \right\vert
^{1/2}d\mu \left( x\right) =\left(
\sum_{i=0}^{l_{A}-1}+\sum_{i=l_{A}}^{m_{A}}+\sum_{i=m_{A}+1}^{M-1}\right)
\int_{I_{i}\backslash I_{i+1}}\left\vert II_{l_{A}}^{2}\left( x\right)
\right\vert ^{1/2}d\mu \left( x\right)
\end{equation*}%
\begin{equation*}
\leq c\sum_{i=l_{A}}^{m_{A}}\int_{I_{i}\backslash I_{i+1}}2^{\left(
l_{A}+i\right) /2}d\mu \left( x\right)
+c\sum_{i=m_{A}+1}^{M-1}\int_{I_{i}\backslash I_{i+1}}2^{\left(
l_{A}+m_{A}\right) /2}d\mu \left( x\right)
\end{equation*}%
\begin{equation*}
\leq c\sum_{i=l_{A}}^{m_{A}}2^{\left( l_{A}+i\right) /2}\frac{1}{2^{i}}%
+c\sum_{i=m_{A}+1}^{M-1}2^{\left( l_{A}+m_{A}\right) /2}\frac{1}{2^{i}}\leq
c<\infty .
\end{equation*}

Analogously we can prove, when $0\leq l_{A}\leq M<m_{A}$ or $M\leq l_{A}\leq
m_{A}.$

Now, prove second part of Theorem \ref{theorem1}. Under condition (\ref{30}%
), there exists an increasing sequence $\left\{ \alpha _{k}:\text{ }k\geq
0\right\} \subset \left\{ n_{k}\text{ }:k\geq 0\right\} $ of the positive
integers, such that
\begin{equation}
\sum_{k=1}^{\infty }\Phi ^{1/4}\left( \alpha _{k}\right) /V^{1/2}\left(
\alpha _{k}\right) \leq c<\infty .  \label{2aaa}
\end{equation}

Let \qquad
\begin{equation*}
F_{A}:=\sum_{\left\{ k;\text{ }\left\vert \alpha _{k}\right\vert <A\right\}
}\lambda _{k}a_{k},\text{\ }
\end{equation*}%
\begin{equation*}
\lambda _{k}:=\Phi ^{1/2}\left( \alpha _{k}\right) /V\left( \alpha
_{k}\right) ,\text{\ }a_{k}:=2^{\left\vert \alpha _{k}\right\vert }\left(
D_{2^{\left\vert \alpha _{k}\right\vert +1}}-D_{2^{\left\vert \alpha
_{k}\right\vert }}\right) .
\end{equation*}

Since$\ $%
\begin{equation}
S_{2^{A}}a_{k}=\left\{
\begin{array}{ll}
a_{k} & \left\vert \alpha _{k}\right\vert <A, \\
0\, & \left\vert \alpha _{k}\right\vert \geq A,%
\end{array}%
\right.  \label{4aa}
\end{equation}%
\begin{equation*}
\text{supp}(a_{k})=I_{\left\vert \alpha _{k}\right\vert },\text{\ }%
\int_{I\left\vert \alpha _{k}\right\vert }a_{k}d\mu =0,\text{\ }\left\Vert
a_{k}\right\Vert _{\infty }\leq 2^{2\left\vert \alpha _{k}\right\vert }=\mu (%
\text{supp }a_{k})^{-2},
\end{equation*}%
if we apply Lemma \ref{lemma0} and (\ref{2aaa}) we conclude that $F=\left(
F_{1},F_{2},...\right) \in H_{1/2}.$

It is easy to show that%
\begin{equation}
\widehat{F}(j)  \label{5aa}
\end{equation}%
\begin{equation*}
=\left\{
\begin{array}{ll}
2^{\left\vert \alpha _{k}\right\vert }\Phi ^{1/2}\left( \alpha _{k}\right)
/V\left( \alpha _{k}\right) , & \text{\thinspace \thinspace }j\in \left\{
2^{\left\vert \alpha _{k}\right\vert },...,2^{_{\left\vert \alpha
_{k}\right\vert +1}}-1\right\} ,\text{ }k=0,1,... \\
0\,, & \text{\thinspace }j\notin \bigcup\limits_{k=0}^{\infty }\left\{
2^{_{\left\vert \alpha _{k}\right\vert }},...,2^{_{\left\vert \alpha
_{k}\right\vert +1}}-1\right\} .\text{ }%
\end{array}%
\right.
\end{equation*}

Let $2^{\left\vert \alpha _{k}\right\vert }<j<\alpha _{k}.$ By using (\ref%
{5aa}) we get that%
\begin{equation}
S_{j}F=S_{2^{\left\vert \alpha _{k}\right\vert }}F+\sum_{v=2^{^{\left\vert
\alpha _{k}\right\vert }}}^{j-1}\widehat{F}(v)w_{v}=S_{2^{\left\vert \alpha
_{k}\right\vert }}F+\frac{\left( D_{_{j}}-D_{2^{\left\vert \alpha
_{k}\right\vert }}\right) 2^{\left\vert \alpha _{k}\right\vert }\Phi
^{1/2}\left( \alpha _{k}\right) }{V\left( \alpha _{k}\right) }  \label{sn}
\end{equation}

Hence%
\begin{equation}
\frac{\sigma _{_{\alpha _{k}}}F}{\Phi \left( \alpha _{k}\right) }=\frac{1}{%
\Phi \left( \alpha _{k}\right) \alpha _{k}}\sum_{j=1}^{2^{\left\vert \alpha
_{k}\right\vert }}S_{j}F+\frac{1}{\Phi \left( \alpha _{k}\right) \alpha _{k}}%
\sum_{j=2^{\left\vert \alpha _{k}\right\vert }+1}^{\alpha _{k}}S_{j}F
\label{7aaa}
\end{equation}%
\begin{equation*}
=\frac{\sigma _{_{2^{\left\vert \alpha _{k}\right\vert }}}F}{\Phi \left(
\alpha _{k}\right) \alpha _{k}}+\frac{\left( \alpha _{k}-2^{\left\vert
\alpha _{k}\right\vert }\right) S_{2^{\left\vert \alpha _{k}\right\vert }}F}{%
\Phi \left( \alpha _{k}\right) \alpha _{k}}+\frac{2^{\left\vert \alpha
_{k}\right\vert }\Phi ^{1/2}\left( \alpha _{k}\right) }{\Phi \left( \alpha
_{k}\right) V\left( \alpha _{k}\right) \alpha _{k}}\sum_{j=2^{_{\left\vert
\alpha _{k}\right\vert }}+1}^{\alpha _{k}}\left( D_{_{j}}-D_{2^{\left\vert
\alpha _{k}\right\vert }}\right)
\end{equation*}%
\begin{equation*}
=III_{1}+III_{2}+III_{3}.
\end{equation*}%
Since%
\begin{equation}
D_{j+2^{m}}=D_{2^{m}}+w_{_{2^{m}}}D_{j},\text{ \qquad when \thinspace
\thinspace }j<2^{m}  \label{f1}
\end{equation}%
we obtain that%
\begin{equation}
\left\vert III_{3}\right\vert =\frac{2^{\left\vert \alpha _{k}\right\vert
}\Phi ^{1/2}\left( \alpha _{k}\right) }{\Phi \left( \alpha _{k}\right)
V\left( \alpha _{k}\right) \alpha _{k}}\left\vert \sum_{j=1}^{\alpha
_{k}-2^{_{\left\vert \alpha _{k}\right\vert }}}\left( D_{j+2^{_{\left\vert
\alpha _{k}\right\vert }}}-D_{2^{\left\vert \alpha _{k}\right\vert }}\right)
\right\vert  \label{9aaa}
\end{equation}%
\begin{equation*}
=\frac{2^{\left\vert \alpha _{k}\right\vert }\Phi ^{1/2}\left( \alpha
_{k}\right) }{\Phi \left( \alpha _{k}\right) V\left( \alpha _{k}\right)
\alpha _{k}}\left\vert \sum_{j=1}^{\alpha _{k}-2^{_{\left\vert \alpha
_{k}\right\vert }}}D_{j}\right\vert =\frac{2^{\left\vert \alpha
_{k}\right\vert }\Phi ^{1/2}\left( \alpha _{k}\right) }{\Phi \left( \alpha
_{k}\right) V\left( \alpha _{k}\right) \alpha _{k}}\left( \alpha
_{k}-2^{_{\left\vert \alpha _{k}\right\vert }}\right) \left\vert K_{\alpha
_{k}-2^{_{\left\vert \alpha _{k}\right\vert }}}\right\vert
\end{equation*}%
\begin{equation*}
\geq c\left( \alpha _{k}-2^{_{\left\vert \alpha _{k}\right\vert }}\right)
\left\vert K_{\alpha _{k}-2^{_{\left\vert \alpha _{k}\right\vert
}}}\right\vert /\left( \Phi ^{1/2}\left( \alpha _{k}\right) V\left( \alpha
_{k}\right) \right) .
\end{equation*}

\textbf{\ }Let $\alpha
_{k}=\sum_{i=1}^{r_{k}}\sum_{k=l_{i}^{k}}^{m_{i}^{k}}2^{k},$ \ where $%
m_{1}^{k}\geq l_{1}^{k}>l_{1}^{k}-2\geq m_{2}^{k}\geq
l_{2}^{k}>l_{2}^{k}-2\geq ...\geq m_{s}^{k}\geq l_{s}^{k}\geq 0.$ Since (see
Theorem \ref{theorem1} and (\ref{markW1})) $\left\Vert III_{1}\right\Vert
_{1/2}\leq c,$\ $\left\Vert III_{2}\right\Vert _{1/2}\leq c,$ $\mu \left\{
E_{l_{i}^{k}}\right\} \geq 1/2^{l_{i}^{k}-1},$ by combining (\ref{7aaa}), (%
\ref{9aaa}) and Lemma \ref{lemma3} we obtain that
\begin{equation*}
\int_{G}\left\vert \sigma _{\alpha _{k}}F(x)/\Phi \left( \alpha _{k}\right)
\right\vert ^{1/2}d\mu \left( x\right) \geq \left\Vert III_{3}\right\Vert
_{1/2}^{1/2}-\left\Vert III_{2}\right\Vert _{1/2}^{1/2}-\left\Vert
III_{1}\right\Vert _{1/2}^{1/2}
\end{equation*}%
\begin{equation*}
\geq c\text{ }\underset{i=2}{\overset{s_{k}-2}{\sum }}\int_{E_{l_{i}^{k}}}%
\left\vert 2^{2l_{i}^{k}}/\left( \Phi ^{1/2}\left( \alpha _{k}\right)
V\left( \alpha _{k}\right) \right) \right\vert ^{1/2}d\mu \left( x\right)
\end{equation*}%
\begin{equation*}
\geq c\overset{s_{k}-2}{\underset{i=2}{\sum }}1/\left( V^{1/2}\left( \alpha
_{k}\right) \Phi ^{1/4}\left( \alpha _{k}\right) \right) \geq cs_{k}/\left(
V^{1/2}\left( \alpha _{k}\right) \Phi ^{1/4}\left( \alpha _{k}\right) \right)
\end{equation*}%
\begin{equation*}
\geq cV^{1/2}\left( \alpha _{k}\right) /\Phi ^{1/4}\left( \alpha _{k}\right)
\rightarrow \infty ,\text{ as }k\rightarrow \infty .
\end{equation*}

Theorem \ref{theorem1} is proved.
\end{proof}

\begin{proof}[\textbf{Proof of Theorem \protect\ref{theorem2}.}]
Let $n\in \mathbb{N}.$ After analogously steps of (\ref{12l}) we only have
show that%
\begin{equation*}
\int\limits_{\overline{I}_{M}}\left( 2^{d\left( n\right) \left( 2-1/p\right)
}\left\vert \sigma _{n}\left( a\right) \right\vert \right) ^{p}d\mu \leq
c_{p}<\infty ,
\end{equation*}%
for every p-atom $a,$ where $I$ denotes the support of the atom.

Analogously of first part of Theorem \ref{theorem1}, we may assume that $a$
be an arbitrary p-atom, with support$\ I=I_{M}$, $\mu \left( I_{M}\right)
=2^{-M}$ and $n>2^{M}.$ Since $\left\Vert a\right\Vert _{\infty }\leq
2^{M/p} $ we can write that
\begin{equation*}
2^{d\left( n\right) \left( 2-1/p\right) }\left\vert \sigma _{n}\left(
a\right) \right\vert \leq 2^{d\left( n\right) \left( 2-1/p\right)
}\left\Vert a\right\Vert _{\infty }\int_{I_{M}}\left\vert K_{n}\left(
x+t\right) \right\vert d\mu \left( t\right)
\end{equation*}%
\begin{equation*}
\leq 2^{d\left( n\right) \left( 2-1/p\right) }2^{M/p}\int_{I_{M}}\left\vert
K_{n}\left( x+t\right) \right\vert d\mu \left( t\right) .
\end{equation*}

Let $x\in I_{l+1}\left( e_{k}+e_{l}\right) ,\,0\leq k,l\leq \left[ n\right]
\leq M.$ Then from Lemma \ref{lemma2} we get that $K_{n}\left( x+t\right)
=0, $ for $t\in I_{M}$ and
\begin{equation}
2^{d\left( n\right) \left( 2-1/p\right) }\left\vert \sigma _{n}\left(
a\right) \right\vert =0.  \label{12a}
\end{equation}

Let $x\in I_{l+1}\left( e_{k}+e_{l}\right) ,\,\left[ n\right] \leq k,l\leq M$
or $k\leq \left[ n\right] \leq l\leq M.$ By using Lemma \ref{lemma4} we can
write that
\begin{equation}
2^{d\left( n\right) \left( 2-1/p\right) }\left\vert \sigma _{n}\left(
a\right) \right\vert \leq 2^{d\left( n\right) \left( 2-1/p\right)
}2^{M\left( 1/p-2\right) +k+l}  \label{12}
\end{equation}%
\begin{equation*}
\leq c_{p}2^{\left[ n\right] \left( 1/p-2\right) +k+l}.
\end{equation*}

By combining (\ref{1}), (\ref{12a}) and (\ref{12}) we obtain that%
\begin{equation*}
\int_{\overline{I_{M}}}\left\vert 2^{d\left( n\right) \left( 2-1/p\right)
}\sigma _{n}a\left( x\right) \right\vert ^{p}d\mu \left( x\right)
\end{equation*}%
\begin{equation*}
\leq \left( \overset{\left[ n\right] -2}{\underset{k=0}{\sum }}\overset{%
\left[ n\right] -1}{\underset{l=k+1}{\sum }}+\overset{\left[ n\right] -1}{%
\underset{k=0}{\sum }}\overset{M-1}{\underset{l=\left[ n\right] }{\sum }}+%
\overset{M-2}{\underset{k=\left[ n\right] }{\sum }}\overset{M-1}{\underset{%
l=k+1}{\sum }}\right) \int_{I_{l+1}\left( e_{k}+e_{l}\right) }\left\vert
2^{d\left( n\right) \left( 2-1/p\right) }\sigma _{n}a\left( x\right)
\right\vert ^{p}d\mu \left( x\right)
\end{equation*}%
\begin{equation*}
+\overset{M-1}{\underset{k=0}{\sum }}\int_{I_{M}\left( e_{k}\right)
}\left\vert 2^{d\left( n\right) \left( 2-1/p\right) }\sigma _{n}a\left(
x\right) \right\vert ^{p}d\mu \left( x\right) \leq c_{p}\overset{M-2}{%
\underset{k=\left[ n\right] }{\sum }}\overset{M-1}{\underset{l=k+1}{\sum }}%
\frac{1}{2^{l}}2^{\left[ n\right] \left( 2p-1\right) }2^{p\left( k+l\right) }
\end{equation*}%
\begin{equation*}
+c_{p}\overset{\left[ n\right] }{\underset{k=0}{\sum }}\overset{M-1}{%
\underset{l=\left[ n\right] +1}{\sum }}\frac{1}{2^{l}}2^{\left[ n\right]
\left( 2p-1\right) }2^{p\left( k+l\right) }+\frac{c_{p}2^{\left[ n\right]
\left( 2p-1\right) }}{2^{M}}\overset{\left[ n\right] }{\underset{k=0}{\sum }}%
2^{p\left( k+M\right) }<c_{p}<\infty .
\end{equation*}

Now, prove second part of Theorem \ref{theorem2}. \textbf{\ }Under
conditions (\ref{31aaa}), there exists sequence $\left\{ \alpha _{k}:\text{ }%
k\geq 0\right\} \subset \left\{ n_{k}:\text{ }k\geq 0\right\} ,$ such that $%
\alpha _{0}\geq 3$ and
\begin{equation}
\sum_{\eta =0}^{\infty }u^{-p}\left( \alpha _{\eta }\right) <c_{p}<\infty ,%
\text{\ }u\left( \alpha _{k}\right) =2^{d\left( \alpha _{k}\right) \left(
1/p-2\right) /2}/\Phi ^{1/2}\left( \alpha _{k}\right) .  \label{121}
\end{equation}

Let \qquad
\begin{equation*}
F_{A}=\sum_{\left\{ k;\text{ }\left\vert \alpha _{k}\right\vert <A\right\}
}a_{k}^{p}/u\left( \alpha _{k}\right) ,\text{ \ }a_{k}^{\left( p\right)
}=2^{\left\vert \alpha _{k}\right\vert \left( 1/p-1\right) }\left(
D_{2^{\left\vert \alpha _{k}\right\vert +1}}-D_{2^{\left\vert \alpha
_{k}\right\vert }}\right) .
\end{equation*}

If we apply Lemma \ref{lemma0} and (\ref{121}), analogously to second part
of Theorem \ref{theorem2}, we conclude that $F\in H_{p}.$

It is easy to show that%
\begin{equation}
\widehat{F}(j)  \label{6aa}
\end{equation}%
\begin{equation*}
=\left\{
\begin{array}{ll}
2^{\left\vert \alpha _{k}\right\vert \left( 1/p-1\right) }/u\left( \alpha
_{k}\right) , & j\in \left\{ 2^{\left\vert \alpha _{k}\right\vert
},...,2^{_{\left\vert \alpha _{k}\right\vert +1}}-1\right\} ,\text{ }%
k=0,1,... \\
0\,, & j\notin \bigcup\limits_{k=0}^{\infty }\left\{ 2^{\left\vert \alpha
_{k}\right\vert },...,2^{_{\left\vert \alpha _{k}\right\vert +1}}-1\right\} .%
\text{ }%
\end{array}%
\right.
\end{equation*}

Let $2^{\left\vert \alpha _{k}\right\vert }<j<\alpha _{k}.$ By applying (\ref%
{6aa}), analogously to (\ref{sn}) and (\ref{7aaa}) we get that%
\begin{equation*}
\frac{\sigma _{_{\alpha _{k}}}F}{\Phi \left( \alpha _{k}\right) }=\frac{%
\sigma _{_{2^{\left\vert \alpha _{k}\right\vert }}}F}{\Phi \left( \alpha
_{k}\right) \alpha _{k}}+\frac{\left( \alpha _{k}-2^{\left\vert \alpha
_{k}\right\vert }\right) S_{2^{\left\vert \alpha _{k}\right\vert }}F}{\Phi
\left( \alpha _{k}\right) \alpha _{k}}
\end{equation*}%
\begin{equation*}
+\frac{2^{\left\vert \alpha _{k}\right\vert \left( 1/p-1\right) }}{\Phi
\left( \alpha _{k}\right) u\left( \alpha _{k}\right) \alpha _{k}}%
\sum_{j=2^{_{\left\vert \alpha _{k}\right\vert }}}^{\alpha _{k}-1}\left(
D_{_{j}}-D_{2^{\left\vert \alpha _{k}\right\vert }}\right)
=IV_{1}+IV_{2}+IV_{3}.
\end{equation*}

Let $\alpha _{k}\in \mathbb{N}$ and $E_{\left[ \alpha _{k}\right] }:=I_{_{%
\left[ \alpha _{k}\right] +1}}\left( e_{\left[ \alpha _{k}\right] -1}+e_{%
\left[ \alpha _{k}\right] }\right) .$ Since $\left[ \alpha
_{k}-2^{\left\vert \alpha _{k}\right\vert }\right] =\left[ \alpha _{k}\right]
,$ By combining (\ref{9a}), (\ref{f1}) and Lemma \ref{lemma2}, analogously
to (\ref{9aaa}), for $IV_{3}$ we have that
\begin{equation*}
\left\vert IV_{3}\right\vert =\frac{2^{\left\vert \alpha _{k}\right\vert
\left( 1/p-1\right) }}{\Phi \left( \alpha _{k}\right) u\left( \alpha
_{k}\right) \alpha _{k}}\left( \alpha _{k}-2^{\left\vert \alpha
_{k}\right\vert }\right) \left\vert K_{\alpha _{k}-2^{\left\vert \alpha
_{k}\right\vert }}\right\vert
\end{equation*}%
\begin{equation*}
=\frac{2^{\left\vert \alpha _{k}\right\vert \left( 1/p-1\right) }}{\Phi
\left( \alpha _{k}\right) u\left( \alpha _{k}\right) \alpha _{k}}\left\vert
2^{\left[ \alpha _{k}\right] }K_{\left[ \alpha _{k}\right] }\right\vert \geq
\frac{2^{\left\vert \alpha _{k}\right\vert \left( 1/p-2\right) }2^{2\left[
\alpha _{k}\right] -4}}{\Phi \left( \alpha _{k}\right) u\left( \alpha
_{k}\right) }
\end{equation*}%
\begin{equation*}
\geq 2^{\left\vert \alpha _{k}\right\vert \left( 1/p-2\right) /2}2^{2\left[
\alpha _{k}\right] -4}/\Phi ^{1/2}\left( \alpha _{k}\right) .
\end{equation*}

It follows that%
\begin{equation*}
\left\Vert IV_{3}\right\Vert _{L_{p,\infty }}^{p}
\end{equation*}%
\begin{equation*}
\geq \left( \frac{2^{\left\vert \alpha _{k}\right\vert \left( 1/p-2\right)
/2}2^{2\left[ \alpha _{k}\right] -4}}{\Phi ^{1/2}\left( \alpha _{k}\right) }%
\right) ^{p}\mu \left\{ x\in G:\text{ }\left\vert IV_{3}\right\vert \geq
\frac{2^{\left\vert \alpha _{k}\right\vert \left( 1/p-2\right) /2}2^{2\left[
\alpha _{k}\right] -4}}{\Phi ^{1/2}\left( \alpha _{k}\right) }\right\}
\end{equation*}%
\begin{equation*}
\geq c_{p}\left( 2^{2\left[ \alpha _{k}\right] +\left\vert \alpha
_{k}\right\vert \left( 1/p-2\right) /2}/\Phi ^{1/2}\left( \alpha _{k}\right)
\right) ^{p}\mu \left\{ \Theta _{\alpha _{k}}\right\}
\end{equation*}%
\begin{equation*}
\geq c_{p}\left( 2^{\left( \left\vert \alpha _{k}\right\vert -\left[ \alpha
_{k}\right] \right) \left( 1/p-2\right) }/\Phi \left( \alpha _{k}\right)
\right) ^{p/2}=c_{p}\left( 2^{d\left( \alpha _{k}\right) \left( 1/p-2\right)
}/\Phi \left( \alpha _{k}\right) \right) ^{p/2}\rightarrow \infty ,\text{\
as }k\rightarrow \infty .
\end{equation*}

By combining (\ref{121}) and first part of Theorem \ref{theorem2} (see also (%
\ref{77})) we have that $\left\Vert IV_{1}\right\Vert _{L_{p,\infty }}\leq
c_{p}<\infty ,$\ $\left\Vert IV_{2}\right\Vert _{L_{p,\infty }}\leq
c_{p}<\infty $\ and%
\begin{equation*}
\left\Vert \sigma _{\alpha _{k}}F\right\Vert _{L_{p,\infty }}^{p}\geq
\left\Vert IV_{3}\right\Vert _{L_{p,\infty }}^{p}-\left\Vert
IV_{2}\right\Vert _{L_{p,\infty }}^{p}-\left\Vert IV_{1}\right\Vert
_{L_{p,\infty }}^{p}\rightarrow \infty ,\text{ as }k\rightarrow \infty .
\end{equation*}

Theorem \ref{theorem2} is proved.
\end{proof}

\begin{proof}[\textbf{Proof of Theorem \protect\ref{theorem3}.}]
Let $F\in H_{1/2}$ and $2^{k}<n\leq 2^{k+1}.$ Then%
\begin{equation*}
\left\Vert \sigma _{n}F-F\right\Vert _{1/2}^{1/2}
\end{equation*}%
\begin{equation*}
\leq \left\Vert \sigma _{n}F-\sigma _{n}S_{2^{k}}F\right\Vert
_{1/2}^{1/2}+\left\Vert \sigma _{n}S_{2^{k}}F-S_{2^{k}}F\right\Vert
_{1/2}^{1/2}+\left\Vert S_{2^{k}}F-F\right\Vert _{1/2}^{1/2}
\end{equation*}%
\begin{equation*}
=\left\Vert \sigma _{n}\left( S_{2^{k}}F-F\right) \right\Vert
_{1/2}^{1/2}+\left\Vert S_{2^{k}}F-F\right\Vert _{1/2}^{1/2}+\left\Vert
\sigma _{n}S_{2^{k}}F-S_{2^{k}}F\right\Vert _{1/2}^{1/2}
\end{equation*}%
\begin{equation*}
\leq c_{p}\left( V^{2}\left( n\right) +1\right) \omega
_{H_{1/2}}^{1/2}\left( 1/2^{k},F\right) +\left\Vert \sigma
_{n}S_{2^{k}}F-S_{2^{k}}F\right\Vert _{1/2}^{1/2}.
\end{equation*}

By simple calculation we get that%
\begin{equation*}
\sigma _{n}S_{2^{k}}F-S_{2^{k}}F=\frac{2^{k}}{n}\left( S_{2^{k}}\sigma
_{2^{k}}F-S_{2^{k}}F\right) =\frac{2^{k}}{n}S_{2^{k}}\left( \sigma
_{2^{k}}F-F\right) .
\end{equation*}

Let $p>0.$ Then (see inequality (\ref{77}))
\begin{equation*}
\left\Vert \sigma _{n}S_{2^{k}}F-S_{2^{k}}F\right\Vert _{1/2}^{1/2}
\end{equation*}%
\begin{equation*}
\leq \frac{2^{k/2}}{n^{1/2}}\left\Vert S_{2^{k}}\left( \sigma
_{2^{k}}F-F\right) \right\Vert _{1/2}^{1/2}\leq \left\Vert \sigma
_{2^{k}}F-F\right\Vert _{H_{1/2}}^{1/2}\rightarrow 0,\text{ as }k\rightarrow
\infty \text{.}
\end{equation*}

Now, proof second part of theorem \ref{theorem3}. Since $\sup_{k\in \mathbb{N%
}}(\alpha _{k})=\infty ,$ there exists sequence $\{\alpha _{k}:k\geq
1\}\subset \{n_{k}:k\geq 1\}$ such that $V(\alpha _{k})\uparrow \infty
,\,\,\,\,$as $k\rightarrow \infty $ and%
\begin{equation}
V^{2}(\alpha _{k})\leq V(\alpha _{k+1}).  \label{4.7}
\end{equation}

We set
\begin{equation*}
F_{A}=\sum_{\left\{ i:\text{ }\left\vert \alpha _{i}\right\vert <A\right\}
}a_{i}^{1/2}/V^{2}(\alpha _{i}).
\end{equation*}

Since $a_{i}^{1/2}(x)$ is 1/2-atom if we apply Lemma \ref{lemma0} and (\ref%
{4.7}) we conclude that $F\in H_{1/2}.$ Moreover,%
\begin{equation}
F-S_{2^{n}}F=\left(
F_{1}-S_{2^{n}}F_{1},...,F_{n}-S_{2^{n}}F_{n},...,F_{n+k}-S_{2^{n}}F_{n+k}%
\right)  \label{20}
\end{equation}%
\begin{equation*}
=\left(
0,...,0,F_{n+1}-S_{2^{n}}F_{n+1},...,F_{n+k}-S_{2^{n}}F_{n+k},...\right)
\end{equation*}%
\begin{equation*}
=\left( 0,...,0,\underset{i=n}{\overset{n+k}{\sum }}a_{i}^{1}/V^{2}(\alpha
_{i}),...\right) ,\ k\in \mathbb{N}_{+}
\end{equation*}%
is martingale and by combining (\ref{20}) and Lemma \ref{lemma0} we get that%
\begin{equation}
\left\Vert F-S_{2^{n}}F\right\Vert _{H_{1/2}}\leq
\sum\limits_{i=n+1}^{\infty }1/V^{2}(\alpha _{i})=O\left( 1/V^{2}(\alpha
_{n})\right) .  \label{4.12}
\end{equation}

It is easy to show that%
\begin{equation}
\widehat{F}(j)=\left\{
\begin{array}{ll}
2^{_{\left\vert \alpha _{k}\right\vert }}/V^{2}(\alpha _{k}), & \text{%
\thinspace }j\in \left\{ 2^{\left\vert \alpha _{k}\right\vert
},...,2^{_{\left\vert \alpha _{k}\right\vert +1}}-1\right\} ,\text{ }%
k=0,1,... \\
0\,, & j\notin \bigcup\limits_{k=0}^{\infty }\left\{ 2^{_{\left\vert \alpha
_{k}\right\vert }},...,2^{_{\left\vert \alpha _{k}\right\vert +1}}-1\right\}
.\text{ }%
\end{array}%
\right.  \label{4.13}
\end{equation}

Let $2^{_{\left\vert \alpha _{k}\right\vert }}<j\leq \alpha _{k}.$ By using (%
\ref{f1})\ we have that
\begin{equation*}
S_{j}F=S_{2^{_{\left\vert \alpha _{k}\right\vert
}}}F+\sum_{v=2^{_{\left\vert \alpha _{k}\right\vert }}}^{j-1}\widehat{F}%
(v)w_{v}=S_{2^{_{\left\vert \alpha _{k}\right\vert }}}F+\frac{%
2^{_{\left\vert \alpha _{k}\right\vert }}w_{2^{_{\left\vert \alpha
_{k}\right\vert }}}D_{_{j-2^{_{\left\vert \alpha _{k}\right\vert }}}}}{%
V^{2}(\alpha _{k})}.
\end{equation*}%
Hence%
\begin{equation}
\sigma _{\alpha _{k}}F-F=\frac{2^{_{\left\vert \alpha _{k}\right\vert }}}{%
\alpha _{k}}\left( \sigma _{2^{_{\left\vert \alpha _{k}\right\vert
}}}(F)-F\right)  \label{nn}
\end{equation}%
\begin{equation*}
+\frac{\alpha _{k}-2^{_{\left\vert \alpha _{k}\right\vert }}}{\alpha _{k}}%
\left( S_{2^{_{\left\vert \alpha _{k}\right\vert }}}F-F\right) +\frac{%
2^{_{\left\vert \alpha _{k}\right\vert }}w_{2^{_{\left\vert \alpha
_{k}\right\vert }}}\left( \alpha _{k}-2^{_{\left\vert \alpha _{k}\right\vert
}}\right) K_{\alpha _{k}-2^{_{\left\vert \alpha _{k}\right\vert }}}}{\alpha
_{k}V^{2}(\alpha _{k})}.
\end{equation*}

By applying (\ref{nn}) we get that%
\begin{equation}
\Vert \sigma _{\alpha _{k}}F-F\Vert _{1/2}^{1/2}\geq \frac{c}{V(\alpha _{k})}%
\Vert \left( \alpha _{k}-2^{_{\left\vert \alpha _{k}\right\vert }}\right)
K_{\alpha _{k}-2^{_{\left\vert \alpha _{k}\right\vert }}}\Vert _{1/2}^{1/2}
\label{a11}
\end{equation}%
\begin{equation*}
-\left( \frac{2^{_{\left\vert \alpha _{k}\right\vert }}}{\alpha _{k}}\right)
^{1/2}\Vert \sigma _{2^{_{\left\vert \alpha _{k}\right\vert }}}F-F\Vert
_{1/2}^{1/2}-\left( \frac{\alpha _{k}-2^{_{\left\vert \alpha _{k}\right\vert
}}}{\alpha _{k}}\right) ^{1/2}\Vert S_{2^{_{\left\vert \alpha
_{k}\right\vert }}}F-F\Vert _{1/2}^{1/2}.
\end{equation*}

Let $\alpha _{k}=\sum_{i=1}^{r_{k}}\sum_{k=l_{i}^{k}}^{m_{i}^{k}}2^{k},$
where $m_{1}^{k}\geq l_{1}^{k}>l_{1}^{k}-2\geq m_{2}^{k}\geq
l_{2}^{k}>l_{2}^{k}-2>...>m_{s}^{k}\geq l_{s}^{k}\geq 0$ and $%
E_{l_{i}^{k}}:=I_{_{l_{i}^{k}+1}}\left( e_{l_{i}^{k}-1}+e_{l_{i}^{k}}\right)
.$ By using Lemma \ref{lemma3} we have that%
\begin{equation}
\int_{G}\left\vert \left( \alpha _{k}-2^{_{\left\vert \alpha _{k}\right\vert
}}\right) K_{\alpha _{k}-2^{_{\left\vert \alpha _{k}\right\vert }}}\left(
x\right) \right\vert ^{1/2}d\mu  \label{33}
\end{equation}%
\begin{equation*}
\geq \frac{1}{16}\underset{i=2}{\overset{s_{k}-2}{\sum }}%
\int_{E_{l_{i}^{k}}}\left\vert \left( \alpha _{k}-2^{_{\left\vert \alpha
_{k}\right\vert }}\right) K_{\alpha _{k}-2^{_{\left\vert \alpha
_{k}\right\vert }}}\left( x\right) \right\vert ^{1/2}d\mu \left( x\right)
\geq \frac{1}{16}\underset{i=2}{\overset{s_{k}-2}{\sum }}\frac{1}{%
2^{l_{i}^{k}}}2^{l_{i}^{k}}
\end{equation*}%
\begin{equation*}
\geq cs_{k}\geq cV(\alpha _{k}).
\end{equation*}

By combining (\ref{a11}-\ref{33}) we have (\ref{kn3}) and Theorem \ref%
{theorem3} is proved.
\end{proof}

\begin{proof}[\textbf{Proof of Theorem \protect\ref{theorem4}.}]
\textbf{\ }Let\textbf{\ }$0<p<1/2.$ Under condition (\ref{cond3}), if we
repeat analogical steps of first part of Theorem \ref{theorem3}, we can show
that (\ref{fe2}) holds.

Now, prove second part of theorem \ref{theorem4}. Since $\sup_{k}d\left(
n_{k}\right) =\infty ,$ there exists $\{\alpha _{k}:k\geq 1\}\subset
\{n_{k}:k\geq 1\}$ such that $\sup_{k}d\left( \alpha _{k}\right) =\infty $
and
\begin{equation}
2^{2d\left( \alpha _{k}\right) \left( 1/p-2\right) }\leq 2^{d\left( \alpha
_{k+1}\right) \left( 1/p-2\right) }.  \label{4.18}
\end{equation}%
We set
\begin{equation*}
F_{A}=\sum_{\left\{ i:\text{ }\left\vert \alpha _{i}\right\vert <A\right\}
}a_{i}^{\left( p\right) }/2^{\left( 1/p-2\right) d\left( \alpha _{i}\right)
}.
\end{equation*}

Since $a_{i}^{p}(x)$ is $p$-atom if we apply Lemma \ref{lemma0} and (\ref%
{4.18}) we conclude that $F\in H_{p}.$ Analogously to (\ref{20}) we can show
that%
\begin{equation}
F-S_{2^{\left\vert \alpha _{k}\right\vert }}F=\left( 0,...,0,\underset{i=k}{%
\overset{k+s}{\sum }}a_{i}/2^{d\left( \alpha _{i}\right) \left( 1/p-2\right)
},...\right) ,\text{ \ }s\in \mathbb{N}_{+}  \label{78}
\end{equation}%
is martingale. By combining (\ref{4.18}) and Lemma \ref{lemma0} we get that%
\begin{equation}
\omega _{H_{p}}(1/2^{\left\vert \alpha _{k}\right\vert },F)  \label{4.21}
\end{equation}%
\begin{equation*}
\leq \sum\limits_{i=k}^{\infty }1/2^{d\left( \alpha _{i}\right) \left(
1/p-2\right) }=O\left( 1/2^{d\left( \alpha _{k}\right) \left( 1/p-2\right)
}\right) .
\end{equation*}

It is easy to show that%
\begin{equation}
\widehat{F}(j)=\left\{
\begin{array}{ll}
2^{\left( 1/p-2\right) \left[ \alpha _{k}\right] }, & \text{\thinspace
\thinspace }j\in \left\{ 2^{\left\vert \alpha _{k}\right\vert
},...,2^{_{\left\vert \alpha _{k}\right\vert +1}}-1\right\} ,\text{ }%
k=0,1,... \\
0\,, & \text{\thinspace }j\notin \bigcup\limits_{n=0}^{\infty }\left\{
2^{_{\left\vert \alpha _{n}\right\vert }},...,2^{_{\left\vert \alpha
_{n}\right\vert +1}}-1\right\} .\text{ }%
\end{array}%
\right.  \label{4.22}
\end{equation}

Analogously we can write that%
\begin{equation*}
\Vert \sigma _{\alpha _{k}}F-F\Vert _{L_{p,\infty }}^{p}\geq 2^{\left(
1-2p\right) \left[ \alpha _{k}\right] }\Vert \left( \alpha
_{k}-2^{\left\vert \alpha _{k}\right\vert }\right) K_{\alpha
_{k}-2^{_{\left\vert \alpha _{k}\right\vert }}}\Vert _{L_{p,\infty }}^{p}
\end{equation*}%
\begin{equation*}
-\left( \frac{2^{\left\vert \alpha _{k}\right\vert }}{\alpha _{k}}\right)
^{p}\Vert \sigma _{2^{_{\left\vert \alpha _{k}\right\vert }}}F-F\Vert
_{L_{p,\infty }}^{p}-\left( \frac{\alpha _{k}-2^{_{\left\vert \alpha
_{k}\right\vert }}}{\alpha _{k}}\right) ^{p}\Vert S_{2^{_{\left\vert \alpha
_{k}\right\vert }}}F-F\Vert _{L_{p,\infty }}^{p}.
\end{equation*}

Let $x\in E_{\left[ \alpha _{k}\right] }.$ By combining (\ref{9a}) and Lemma %
\ref{lemma1} we have that%
\begin{equation*}
\mu \left( x\in G:\left( \alpha _{k}-2^{_{\left\vert \alpha _{k}\right\vert
}}\right) \left\vert K_{\alpha _{k}-2^{_{\left\vert \alpha _{k}\right\vert
}}}\right\vert \geq 2^{2\left[ \alpha _{k}\right] -4}\right) \geq \mu \left(
E_{\left[ \alpha _{k}\right] }\right) \geq 1/2^{\left[ \alpha _{k}\right]
-4},
\end{equation*}%
\begin{equation*}
2^{2p\left[ \alpha _{k}\right] -4}\mu \left( x\in G:\left( \alpha
_{k}-2^{_{\left\vert \alpha _{k}\right\vert }}\right) \left\vert K_{\alpha
_{k}-2^{_{\left\vert \alpha _{k}\right\vert }}}\right\vert \geq 2^{2\left[
\alpha _{k}\right] -4}\right) \geq 2^{\left( 2p-1\right) \left[ \alpha _{k}%
\right] -4}.
\end{equation*}

Hence
\begin{equation*}
\left\Vert \sigma _{n_{k}}F-F\right\Vert _{L_{p,\infty }}\nrightarrow
0,\,\,\,\text{as}\,\,\,k\rightarrow \infty
\end{equation*}%
and Theorem \ref{theorem4} is proved.
\end{proof}

\end{document}